\newcommand{\be}{\begin{equation}}
\newcommand{\ee}{\end{equation}}
\newcommand{\ba}{\begin{array}}
\newcommand{\ea}{\end{array}}
\newcommand{\bea}{\begin{eqnarray}}
\newcommand{\eea}{\end{eqnarray}}
\newcommand{\bee}{\begin{eqnarray*}}
\newcommand{\eee}{\end{eqnarray*}}
\newcommand{\MM}{\mathcal{M}}
\newcommand{\Om}{\Omega}
\newtheorem{theorem}{Theorem}
\newtheorem{lemma}{Lemma}[section]
\newtheorem{prop}{Proposition}[section]
\newtheorem*{remark*}{Remark}
\newtheorem*{remarks*}{Remarks}
\numberwithin{equation}{section}
\newcommand{\R}{\mathbb{R}}
\newcommand{\C}{\mathbb{C}}
\newcommand{\DD}{\Delta}
\newcommand{\pt}{\partial}
\renewcommand{\leq}{\leqslant}
\renewcommand{\geq}{\geqslant}
\newcommand{\weakto}{\rightharpoonup}
\newcommand{\eps}{\varepsilon}
\newcommand{\cO}{\mathcal O}
\renewcommand{\epsilon}{\varepsilon}
\renewcommand{\phi}{\varphi}
\def\section{\@startsection{section}{1}%
  \z@{1.5\linespacing\@plus\linespacing}{.5\linespacing}%
  {\normalfont\bfseries\large\centering}}
\begin{document}

\title{Blowup for Fractional NLS }

\begin{abstract}
We consider fractional NLS with focusing power-type nonlinearity
$$
i \pt_t u = (-\DD)^s u - |u|^{2 \sigma} u, \quad (t,x) \in \R \times \R^N,
$$
where $1/2< s < 1$ and $0 < \sigma < \infty$ for $s \geq N/2$ and $0 < \sigma \leq 2s/(N-2s)$ for $s < N/2$. We prove a general criterion for blowup of radial solutions in $\R^N$ with $N \geq 2$ for $L^2$-supercritical and $L^2$-critical powers $\sigma \geq 2s/N$. In addition, we study the case of fractional NLS posed on a bounded star-shaped domain $\Om \subset \R^N$ in any dimension $N \geq 1$ and subject to exterior Dirichlet conditions. In this setting, we prove a general blowup result without imposing any symmetry assumption on $u(t,x)$. 

For the blowup proof in $\R^N$, we derive a localized virial estimate for fractional NLS in $\R^N$, which uses Balakrishnan's formula for the fractional Laplacian $(-\DD)^s$ from semigroup theory. In the setting of bounded domains, we use a Pohozaev-type estimate for the fractional Laplacian to prove blowup.
\end{abstract}

\author[T. Boulenger]{Thomas Boulenger}
\address{University of Basel, Department of Mathematics and Computer Science, Spiegelgasse 1, CH-4051 Basel, Switzerland.}
\email{thomas.boulenger@unibas.ch}

\author[D. Himmelsbach]{Dominik Himmelsbach}
\address{University of Basel, Department of Mathematics and Computer Science, Spiegelgasse 1, CH-4051 Basel, Switzerland.}
\email{dominik.himmelsbach@unibas.ch}

\author[E. Lenzmann]{Enno Lenzmann}
\address{University of Basel, Department of Mathematics and Computer Science, Spiegelgasse 1, CH-4051 Basel, Switzerland.}
\email{enno.lenzmann@unibas.ch}

\maketitle

\section{Introduction and Main Results}

In this paper, we derive general criteria for blowup of solutions $u=u(t,x)$ for fractional NLS with focusing power-type nonlinearity given by 
\be \label{eq:fnls1}
i \pt_t u = (-\DD)^s u - |u|^{2 \sigma} u, \quad (t,x) \in \R \times \R^N.
\ee
Here the integer $N \geq 1$ denotes the space dimension, $(-\DD)^s$ stands for the fractional Laplacian with power $s \in (0,1)$, defined by its symbol $|\xi|^{2s}$ in Fourier space, and $\sigma > 0$ is a given exponent. The evolution problem \eqref{eq:fnls1} can be seen as a canonical model for a nonlocal dispersive PDE with focusing nonlinearity that can exhibit solitary waves, turbulence phenomena, and blowup of solutions (i.\,e.~singularity formation). We refer to \cite{CaMaMc-01, We-87, IoPu-14,KrLeRa-13,GuZh-11,ChHaHwOz-13, ChHwKwLe-13, KlSpMa-14} for a (non-exhaustive) list of studies of fractional NLS in mathematics, numerics, and physics.

Although problem \eqref{eq:fnls1} bears a strong resemblance to the well-studied classical NLS (corresponding to $s=1$), a general existence theorem for blowup solutions of problem \eqref{eq:fnls1} has remained a challenging open problem so far. To the best of the authors' knowledge, the cases that have been successfully addressed by now are: i) fractional NLS with nonlocal Hartree-type nonlinearities and radial data \cite{FrLe-07,ChHaHwOz-13}, and ii) a perturbative construction of minimal mass blowup solutions for the so-called focusing half-wave equation in $N=1$ dimension \cite{KrLeRa-13}. Despite these efforts, the existence of blowup solutions for the model case of fractional NLS with power-type nonlinearity has mainly remained elusive up to now, but  it has been strongly supported by numerical evidence \cite{KlSpMa-14}. In the present paper, we derive general blowup results for \eqref{eq:fnls1} in both the  $L^2$-supercritical and $L^2$-critical cases where $\sigma > 2s/N$ and $\sigma=2s/N$, respectively. In what follows, we shall discuss blowup for the fractional NLS \eqref{eq:fnls1} posed on all of $\R^N$ as well as on bounded domains. We treat these two cases separately as follows.

\subsection{Radial Blowup in $\R^N$}
We consider the initial-value problem
\be \tag{fNLS} \label{eq:fNLS}
\left \{ \begin{array}{ll} i \pt_t u = (-\DD)^s u - |u|^{2 \sigma} u, \\
u(0,x) = u_0(x) \in H^s(\R^N), \quad u : [0,T) \times \R^N \to \C. \end{array} \right .
\ee
Recall that we assume that $s \in (0,1)$, $\sigma > 0$, and $N \geq 1$ denotes the space dimension. In what follows, we shall assume that we are given a sufficiently regular solution $u(t)$. More precisely, that $u \in C([0,T); H^{2s}(\R^N))$ for reasons explained below. Let us mention that the local well-posedness theory for the range of $s \in (0,1)$, $N \geq 1$, and exponents $\sigma > 0$ considered below is not completely settled yet; see, e.\,g., \cite{HoSi-15,GuZh-11} for local well-posedness results for non-radial and radial data, respectively.

The evolution problem \eqref{eq:fNLS} shares many obvious similarities with the classical nonlinear Schr\"odinger equation. In particular, we have the formal conservation laws for the {\em energy} and the {\em $L^2$-mass} given by
\be
E[u] = \frac{1}{2} \int_{\R^N}  |(-\DD)^{s/2} u|^2 \, dx -  \frac{1}{2 \sigma+2} \int_{\R^N} |u|^{2 \sigma+2} \, dx, \quad M[u] = \int_{\R^N} |u|^2 \, dx.
\ee
In view of these conserved quantities and scaling properties of \eqref{eq:fNLS}, it is convenient to introduce the {\em scaling index} defined as
\be
s_c = \frac{N}{2} - \frac{s}{\sigma}.
\ee
Reflecting the scaling properties of \eqref{eq:fNLS} and the conservation of $M[u]$, we refer to the cases $s_c < 0$, $s_c=0$, and $s_c > 0$ as $L^2$-subcritical, $L^2$-critical, and $L^2$-supercritical, respectively. Furthermore, in analogy to classical NLS, we can use (formally at least) the conserved quantities $E[u]$ and $M[u]$ together with a sharp Gagliardo-Nirenberg inequality \eqref{ineq:GN} to conclude that $H^s$-valued solutions $u(t)$ are always a-priori bounded in the $L^2$-subcritical case $s_c < 0$. Thus we can expect that $H^s$-valued solutions $u(t)$ may blowup in finite (or infinite) time only if $s_c \geq 0$ holds. Moreover, guided by a further analogy to classical NLS, we expect that sufficient criteria for blowup of $u(t)$ can be found in terms of quantities of {\em ground states} $Q \in H^s(\R^N)$ that optimize the Gagliardo-Nirenberg inequality \eqref{ineq:GN} and satisfy
\be \label{eq:Q1}
(-\DD)^s  Q + Q - Q^{2 \sigma +1} = 0 \quad \mbox{in $\R^N$}
\ee 
in the energy-subcritical case $s_c < s$. In the energy-critical case $s=s_c$ (which needs $N > 2s$), the relevant object $Q \in \dot{H}^s(\R^N)$ is the ground state, which is the optimizer for the Sobolev inequality \eqref{ineq:Sob} normalized such that it holds
\be \label{eq:Q2}
(-\DD)^s Q - Q^{\frac{N+2s}{N-2s}} = 0 \quad \mbox{in $\R^N$}.
\ee
Uniqueness (modulo symmetries) of ground states $Q \in H^s(\R^N)$ for equation \eqref{eq:Q1} and all $s < s_c$ and any $N \geq 1$ was recently shown in \cite{FrLe-13, FrLeSi-15}, whereas uniqueness (modulo symmetries) of ground states $Q \in \dot{H}^s(\R^N)$ for \eqref{eq:Q2} is a classical fact due to Lieb \cite{Li-83}.

Our first main result indeed establishes a sufficient criterion for blowup of radial solutions for $0 \leq s_c \leq s$ in terms of the corresponding ground state $Q$. 
 
\begin{theorem} \label{thm:L2super}
Let  $N \geq 2$, $s \in (\frac 1 2, 1)$, $0 \leq s_c \leq s$ with $\sigma < 2s$. Assume that $u \in C \big ([0,T); H^{2s}(\R^N)\big )$ is a radial solution of \eqref{eq:fNLS}. Furthermore, we suppose that either
$$
E[u_0] < 0,
$$
or, if $E[u_0] \geq 0$, we assume that
$$
\left \{ \begin{array}{ccc} 
E[u_0]^{s_c} M[u_0]^{s-s_c} & <  &E[Q]^{s_c} M[Q]^{s-s_c} , \\[1ex]
\| (-\DD)^{s/2} u_0 \|_{L^2}^{s_c} \| u_0 \|_{L^2}^{s-s_c} & > & \| (-\DD)^{s/2} Q \|_{L^2}^{s_c} \| Q \|_{L^2}^{s-s_c} .
\end{array}
\right .
$$
Then the following conclusions hold.
\begin{itemize}
\item[(i)] {\bf $L^2$-Supercritical Case:} If $0 < s_c \leq s$, then $u(t)$ blows up in finite time in the sense that $T < +\infty$ must hold.
\item[(ii)] {\bf $L^2$-Critical Case:} If $s_c=0$, then $u(t)$ either blows up in finite time in the sense that $T < +\infty$ must hold, or $u(t)$ blows up infinite time such that
$$
\| (-\DD)^{s/2} u(t) \|_{L^2} \geq C t^{s} \quad \mbox{for all $t \geq t_*$},
$$ 
with some constants $C > 0$ and $t_* > 0$ that depend only on $u_0, s, N$.
\end{itemize}
\end{theorem}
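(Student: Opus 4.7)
The plan is to derive a localized virial differential inequality exploiting Balakrishnan's subordination formula for $(-\DD)^s$, and then to convert it into either finite-time blowup or the stated $t^s$ lower bound on $\|(-\DD)^{s/2} u(t)\|_{L^2}$. Fix a smooth radial cutoff $\phi_R : \R^N \to \R$ with $\phi_R(x) = |x|^2/2$ on $\{|x| \leq R\}$, $\phi_R$ constant on $\{|x| \geq 2R\}$, and $\|\partial^\alpha \phi_R\|_\infty \lesssim R^{2-|\alpha|}$, and set
$$
\MM_{\phi_R}[u(t)] := 2 \,\mathrm{Im} \int_{\R^N} \bar{u}\,\nabla \phi_R \cdot \nabla u\,dx.
$$
Because $s > 1/2$, the interpolation $\|\nabla u\|_{L^2} \lesssim \|u\|_{L^2}^{1-1/s} \|(-\DD)^{s/2}u\|_{L^2}^{1/s}$, combined with $\|\nabla \phi_R\|_\infty \lesssim R$ and conservation of $M[u]$, yields the crucial a priori bound
$$
|\MM_{\phi_R}[u(t)]| \leq C(R,u_0)\,\|(-\DD)^{s/2} u(t)\|_{L^2}^{1/s}.
$$

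To compute $\tfrac{d}{dt}\MM_{\phi_R}[u(t)]$ I invoke Balakrishnan's representation
$(-\DD)^s = \tfrac{\sin(\pi s)}{\pi}\int_0^\infty m^{s-1}\,\frac{-\DD}{m-\DD}\,dm$,
which reduces the commutator $[\phi_R, (-\DD)^s]$ to an $m$-integral of resolvent commutators $[\phi_R,(m - \DD)^{-1}]$ amenable to classical integration by parts. In the idealized unlocalized case $\phi(x) = |x|^2/2$, energy conservation eliminates the $L^{2\sigma+2}$-norm and the identity collapses to
$$
\tfrac{d}{dt}\MM_\phi[u(t)] = -(2N\sigma - 4s)\,\|(-\DD)^{s/2} u(t)\|_{L^2}^2 + 4 N \sigma E[u_0].
$$
In the supercritical case $s_c > 0$ one has $2N\sigma - 4s > 0$; if $E[u_0]<0$ the right-hand side is already strictly negative, and otherwise the strict threshold hypotheses, the sharp Gagliardo--Nirenberg inequality saturated by $Q$, and a continuity/invariant-set argument keep $\|(-\DD)^{s/2}u(t)\|_{L^2}$ uniformly above $\|(-\DD)^{s/2}Q\|_{L^2}$ on $[0,T)$, producing a strictly negative bound on $\tfrac{d}{dt}\MM_\phi$. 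In the critical case $s_c=0$ the kinetic coefficient vanishes and the thresholds degenerate to $E[u_0]<0$, so $\tfrac{d}{dt}\MM_\phi[u] = 8 s E[u_0] \leq -\delta$. Localization errors on $\{|x|\geq R\}$ are tamed via the radial Sobolev decay $|u(x)|\leq C|x|^{-(N-1)/2}\|u\|_{H^s}$ (valid for $s>1/2$ and $N\geq 2$, whence both hypotheses appear), yielding, for $R$ sufficiently large,
$$
\tfrac{d}{dt}\MM_{\phi_R}[u(t)] \leq \begin{cases} -\delta\,\|(-\DD)^{s/2} u(t)\|_{L^2}^2, & s_c>0, \\[1ex] -\delta, & s_c=0,\end{cases}
$$
uniformly on $[0,T)$.

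To close the argument, in the critical case a single integration gives $\MM_{\phi_R}[u(t)] \leq -\delta t + C$, and the a priori bound upgrades this to $\|(-\DD)^{s/2} u(t)\|_{L^2} \geq C t^s$ unless $T<\infty$ first. In the supercritical case, combining the differential inequality with the a priori bound (using $2s>1$, a consequence of $s>1/2$) produces the nonlinear ODE $\tfrac{d}{dt}|\MM_{\phi_R}|\geq c\,|\MM_{\phi_R}|^{2s}$ once $\MM_{\phi_R}$ has turned negative, whose solutions blow up in finite time and therefore force $T<\infty$. I expect the principal obstacle to be the rigorous control of Balakrishnan's $m$-integral of resolvent commutators: absolute convergence must be proved, the leading term $4s\|(-\DD)^{s/2}u\|^2$ extracted, and remaining errors tamed by radial decay in a way that uses the extra hypothesis $\sigma<2s$ to keep the error subcritical relative to $\|(-\DD)^{s/2}u\|_{L^2}^2$. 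A secondary subtlety is the continuity/bootstrap argument converting the scale-invariant ground-state threshold $E^{s_c}M^{s-s_c}$ into an explicit coercivity constant $\delta>0$, in parallel to its role for classical NLS.
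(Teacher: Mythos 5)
Your overall strategy mirrors the paper's almost exactly: use Balakrishnan's formula $(-\DD)^s = \frac{\sin\pi s}{\pi}\int_0^\infty m^{s-1}\frac{-\DD}{-\DD+m}\,dm$ to write the commutator as an $m$-integral of resolvent commutators, localize with a radial cutoff $\phi_R$, bound the tail of the nonlinearity via the radial Sobolev (Strauss) decay, close the supercritical case with the sharp Gagliardo--Nirenberg threshold, and convert the differential inequality into blowup via the a priori bound $|\MM_{\phi_R}| \lesssim \|(-\DD)^{s/2}u\|_{L^2}^{1/s}$ and a nonlinear ODE with exponent $2s>1$. Your identification of the role of $\sigma < 2s$ (to keep the error exponent $\sigma/s + \eps$ below $2$ for the Young absorption) and of $s>1/2$, $N\geq 2$ is correct and matches the paper.

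However, there is a genuine gap in your treatment of the $L^2$-critical case $s_c = 0$. You assert that the radial Sobolev decay alone tames the localization errors to produce the uniform bound
$$
\tfrac{d}{dt}\MM_{\phi_R}[u(t)] \leq -\delta \quad \text{for all } t \in [0,T),
$$
but this does not follow. In the critical case the negative kinetic term $-2(\sigma N - 2s)\|(-\DD)^{s/2}u\|_{L^2}^2$ vanishes identically, so there is nothing to absorb the error coming from the nonlinearity tail, which on its own has the form $C R^{-\sigma(N-1)+\eps s}\|(-\DD)^{s/2}u(t)\|_{L^2}^{(\sigma/s)+\eps}$ and is unbounded along any sequence of times where the kinetic energy escapes to infinity. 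Taking $R$ large only shrinks the coefficient; it does not prevent the error from diverging, and a Young-type split is unavailable since $\delta = 0$. The paper resolves this by a substantially refined estimate (Lemma \ref{lem:MR_radial_fine}): instead of discarding the nonnegative localization defect $-4\int_0^\infty m^s\int_{\R^N}\psi_{1,R}|\nabla u_m|^2\,dx\,dm$, one keeps it and shows, after re-expressing the nonlinear tail via $(-\DD)^{s/2}(\psi_{2,R}^{N/4s}u)$ and commutator control in the Balakrishnan variable, that its contribution is dominated by $c(\eta)\psi_{2,R}^{N/2s}$, so that the pointwise inequality $\psi_{1,R} - c(\eta)\psi_{2,R}^{N/2s} \geq 0$ (which forces a specially constructed cutoff, as in Section \ref{sec:cutoff} of the paper, not a generic $\phi_R$) yields the clean bound $\tfrac{d}{dt}\MM_{\phi_R} \leq 8sE[u_0] + o_R(1)$. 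Without some such mechanism for absorbing the Strauss error into a residual coercive term, your critical-case conclusion $\|(-\DD)^{s/2}u(t)\|_{L^2}\geq Ct^s$ is not established.
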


\begin{remarks*}
{\em 
1) The condition $\sigma < 2s$ is technical; see the proof of Theorem \ref{thm:L2super} for details.

2) In the energy-critical case $s=s_c$, it may happen that $Q \not \in L^2(\R^N)$ and thus $M[Q] = +\infty$; see Section \ref{sec:cutoff} below. In this case, we use the convention $(+\infty)^0 = 1$. Hence the second blowup condition above becomes $E[u_0] < E[Q]$ and $\| (-\DD)^{s/2} u_0 \|_{L^2} > \| (-\DD)^{s/2} Q \|_{L^2}$ when $s=s_c$.

3) In the $L^2$-critical case $s_c=0$, the second blowup condition stated above is void, since we then get $M[u_0] < M[Q]$ and $M[u_0] > M[Q]$, which is impossible. Thus for $s_c=0$ the only admissible condition is $E[u_0] < 0$.  

4) We prefer to work with strong $H^{2s}$-valued solutions $u(t)$ of \eqref{eq:fNLS}, since we do not a have full-fledged local well-posedness theory for \eqref{eq:fNLS} at our disposal, which would allow us to cover the case of $H^{s}$-valued solutions $u(t)$ by approximation arguments in the estimates derived below. 

5) Note that we exclude the half-wave case $s=1/2$, which is due to the lack of control for the pointwise decay of a radial function $u \in H^{1/2}(\R^N)$ with $N \geq 2$. See also the remark following Theorem \ref{thm:dom} below about the half-wave case  on bounded domains.

6) The condition $s_c \leq s$ will be needed at a certain step in the proof below. However, the rest of the arguments carry over to negative energy solutions in the energy-supercritical range $s_c > s$ in a verbatim way. 

7) We refer to \cite{HoRo-07} for the idea of using the scale-invariant quantity $E[u_0]^{s_c} M[u_0]^{s-s_c}$ for blowup for classical NLS. See also \cite{Gu-14}.

8) We refer to the recent work \cite{ChHwSh-15} for a Kenig-Merle-type analysis of the energy-critical case $s_c=s$, where also a conditional result on the existence of type II blowup is given. 
}
\end{remarks*}

\subsection*{Comments on the Proof of Theorem \ref{thm:L2super}}
By integrating \eqref{eq:fNLS} against $i( x \cdot \nabla + \nabla \cdot x) \overline{u}(t)$ on $\R^N$, we make the observation that any sufficiently regular and spatially localized solution $u=u(t,x)$ of \eqref{eq:fNLS} satisfies the virial identity
\be \label{eq:virial}
\frac{d}{dt} \left ( 2 \, \mathrm{Im} \, \int_{\R^N} \overline{u}(t) x \cdot \nabla u(t) \, dx \right ) = 4 \sigma N E[u_0] - 2 (\sigma N - 2s)  \| (-\DD)^{s/2} u(t) \|_{L^2}^2 .
\ee
This law can be regarded as a differential expression reflecting the scaling properties of \eqref{eq:fNLS}, similar to the celebrated {\em Pohozaev identities} that occur in nonlinear elliptic PDE used to rule out nontrivial solutions in supercritical cases.\footnote{Actually, we will exploit this connection for (fNLS) on a bounded domain; see Theorem \ref{thm:dom} below.} However, the virial identity {\em per se} does not offer enough information to deduce singularity formation for solutions with negative energy $E[u_0] < 0$ in the $L^2$-critical and $L^2$-supercritical cases when $\sigma \geq 2s/N$. So far two methods have successfully been used to prove blowup results.

\subsubsection*{Coupling to a Variance Law} For classical NLS (i.\,e.~when $s=1$) we have the {\em Variance-Virial Law}, which can be expressed as
\be \label{eq:variance}
\frac{1}{2} \frac{d}{dt} \left ( \int_{\R^N} |x|^2 |u(t)|^2 \,dx \right ) = 2 \, \mathrm{Im} \left ( \int_{\R^N} \overline{u}(t) x \cdot \nabla u(t) \, dx \right ),
\ee
provided that $\int_{\R^N} |x|^2 |u_0|^2 \, dx < +\infty$ holds. By combining \eqref{eq:virial} and \eqref{eq:variance}, we obtain Glassey's celebrated blowup result for classical NLS with negative energy $E[u_0] < 0$ and finite variance (see, e.\,g.~\cite{SuSu-99} for a textbook discussion). However, this argument breaks down for $s \neq 1$, since identity \eqref{eq:variance} fails in this case, as one readily checks by dimensional analysis. Rather, it turns out that the suitable generalization of the variance for fractional NLS is given by the nonnegative quantity
\be
\mathcal{V}^{(s)}[u(t)] = \int_{\R^N} \overline{u}(t) x \cdot  (-\DD)^{1-s} x u(t) \,dx =  \| x (-\DD)^{\frac{1-s}{2}} u(t)  \|_{L^2}^2.
\ee
Given any sufficiently regular and spatially localized solution $u(t)$ of the {\em free} fractional Schr\"odinger equation $i \pt_t u = (-\DD)^s u$, a calculation yields the equation
\be \label{eq:var2}
\frac{1}{2s} \frac{d}{dt} \mathcal{V}^{(s)}[u(t)] = 2 \, \mathrm{Im} \, \left ( \int_{\R^N} \overline{u}(t) x \cdot \nabla u(t) \, dx \right ).
\ee
However, the use of $\mathcal{V}^{(s)}[u(t)]$ brings in serious complications in the nonlinear setting when $s \neq 1$. First of all, the identity \eqref{eq:var2} breaks down and the correct equation acquires highly nontrivial error terms due to the nonlinearity. In particular, for $s \in (0,1)$, these error terms seem very hard to control for local nonlinearities with $f(u) = -|u|^{2 \sigma} u$ being the model case, even in the class of radial solutions. So far, the only known cases where the application of $\mathcal{V}^{(s)}[u(t)]$ has turned out to be successful to prove blowup results for fractional NLS deal with radial solutions and focusing Hartree-type nonlinearities, e.\,g., $f(u) = -(|x|^{-\gamma} \ast |u|^2 ) u$ with $\gamma \geq 1$; see \cite{FrLe-07, FrLe-07b,ChHaHwOz-13, ChHwKwLe-15}. See also \cite{BoLe-15}, where a localized version of $\mathcal{V}^{(s)}[u(t)]$ is used to show blowup for biharmonic NLS (corresponding to $s=2$) with local nonlinearities by using some smoothing properties of $(-\DD)^{\frac{1-s}{2}}$ when $s > 1$.
 
\subsubsection*{Localized Virial Law} Another method for proving blowup results, and which by-passes the use of a variance-type quantity, is to replace the unbounded function $x$ by a suitable cutoff function $\phi_R$ such that $\nabla \phi_R(x) \equiv x$ for $|x| \leq R$ and $\nabla \phi_R(x) \equiv \mbox{const}$ for $|x| \gg R$. To the best of our knowledge, the approach goes to  Ogawa and Tsutsumi \cite{OgTs-91}, where blowup for radial solutions (with infinite variance) of $L^2$-supercritical focusing classical NLS is proved. (See also \cite{Me-96} for a use of localized virial identities to show blowup for the Zakharov system.)

In fact, it is the strategy of localized virial identities that we implement for fractional NLS to prove Theorem \ref{thm:L2super}. However, when one tries to directly apply the arguments in \cite{OgTs-91} to study the time evolution of $\MM_{\phi_R}[u(t)]$ for  fractional NLS, one encounters severe difficulties due to the nonlocal nature of $(-\DD)^s$. In particular, the nonnegativity of certain error terms due to the localization, which are pivotal in the arguments of \cite{OgTs-91}, seem to be elusive. To overcome this difficulty, we employ the representation formula
\be \label{eq:Balakintro}
(-\DD)^s = \frac{\sin \pi s}{\pi} \int_0^\infty m^{s-1} \frac{-\DD}{-\DD+m} \, dm,
\ee
valid for all $s \in (0,1)$, which is also known as {\em Balakrishnan's formula} used in semigroup theory (see, e\,g., \cite{Ba-60,Pa-83}). In fact, by means of \eqref{eq:Balakintro}, we are able to derive to the differential estimate 
\be \label{eq:Mineq}
\begin{aligned}
 \frac{d}{dt} \MM_{R}[u(t)]  & \leq 4  \sigma N E[u_0] - 2 \delta \| (-\DD)^{s/2} u(t) \|_{L^2}^2 \\
 & \quad + o_R(1) \cdot \left (1 +  \|(-\DD)^{s/2} u(t) \|_{L^2}^{(\sigma/s)+} \right ) ,
 \end{aligned}
\ee
for any sufficiently regular and radial solution $u(t,x)$ of \eqref{eq:fNLS} in dimensions $N \geq 2$ and $s \in (1/2,1)$. Here $\delta = \sigma N -2s > 0$ is a positive constant when $s_c>0$, and the error term $o_R(1)$ tends to $0$ as $R \to \infty$ uniformly in $t$. With the help of the key estimate \eqref{eq:Mineq}, we can then apply a standard comparison ODE argument to show that $u(t)$ cannot exist for all times $t \geq 0$ under the assumptions of Theorem \ref{thm:L2super}. For the $L^2$-critical case $s_c=0$ and hence $\delta=0$, the differential estimate \eqref{eq:Mineq} needs to be refined and leads only to the weaker conclusion as stated in Theorem \ref{thm:L2super} (ii).

Finally, let us also mention that the strategy of the proof of Theorem \ref{thm:L2super} can be carried over to radial solutions $u(t)$ of fractional NLS of the form
$$
i \pt_t u = L u + f(u),
$$
where $f(u)$ is a local or Hartree-type nonlinearity that satisfies appropriate conditions (e.\,g.~focusing and $L^2$-supercritical or $L^2$-critical). Moreover, the dispersive symbol $L$ can be of the form $L= (-\DD)^{s_1} + (-\DD)^{s_2}$ with $s_1, s_2 \in (1/2,1)$ or $L=(-\DD+1)^{s}$ with $s \in (1/2,1)$.

\subsection{Blowup on Bounded Domains} As our second main result, we establish a general  blowup result for fractional NLS that are posed on a bounded domain $\Om \subset \R^N$ with $N \geq 1$. [In particular, the following discussion can be applied to the one-dimensional case $N=1$ when $\Omega = (a,b) \subset \R$ is a bounded open interval.] Here the fractional Laplacian $(-\DD)^s$ on $\Omega$ will be supplemented with the so-called exterior Dirichlet condition on $\R^N \setminus \Om$.  In fact, this is a natural choice in view of applications in physics and probability. (Another non-equivalent definition of the fractional Laplacian would be $A^s= (-\DD |_{\mathrm{Dir}})^s$ by using the spectral calculus for the Dirichlet Laplacian $(-\DD) |_{\mathrm{Dir}}$ on $\Om$. We hope to discuss the setting with $A^s$ in future work.) 
 
Let $\Om \subset \R^N$ with $N \geq 1$ be a smooth bounded domain. We consider the fractional NLS with focusing power-type nonlinearity posed on $\Om \subset \R^N$ given by the initial-value problem
\be \tag{fNLS$_\Omega$}  \label{eq:fnlsO}
\left \{ \begin{array}{ll} i \pt_t u = (-\DD)^s  u - |u|^{2 \sigma} u, & \quad \mbox{for $x \in \Om$ and $t \in [0,T)$}, \\
u(0,x) = u_0(x) , & \quad \mbox{for $x \in \Om$ and $t =0$}, \\
u(t,x) = 0 , & \quad \mbox{for $x \in \R^N \setminus \Om$ and $t \geq 0$}.
\end{array} \right .
\ee
To give a proper definition of the fractional Laplacian $(-\DD)^s$ on $\Om$ appearing above, we collect some functional analytic facts from the literature concerning the fractional Laplacian $(-\DD)^s$ on $\Om$ with exterior Dirichlet conditions. For any $s \geq 0$, we introduce the space of $H^s$-functions in $\R^N$ that vanish outside the set $\Om$, which we denote by
\be \label{def:Hs0}
H_0^s(\Om) := \left \{ u \in H^s(\R^N) : \mbox{$u(x) = 0$ for a.\,e.~$x \in \R^N \setminus \Om$} \right \} .
\ee
It can be shown that 
\be
\mathcal{Q}(u,v) := \int_{\Om} \overline{u} (-\DD)^s v \, dx
\ee
is a closed nonnegative symmetric quadratic form with form domain $H_0^s(\Om) \subset L^2(\Om)$. By standard operator theory, there is a unique self-adjoint operator $L=L^* \geq 0$ such that $\mathcal{Q}(u,v) = \langle u, L v \rangle$ for all $u, v \in H^s_0(\Om)$, where $\langle f, g \rangle = \int_{\Om} \overline{f} g \, dx$ denotes the inner product on $L^2(\Om)$. For notational simplicity, we shall often write $L=(-\DD)^s$
 in the following (and we thus skip the dependence of $L$ on the domain $\Om$). Furthermore, let us denote the operator domain of $(-\DD)^s$ by
\be
D((-\DD)^s) = \{ u \in H^s_0(\Om) : (-\DD)^s u \in L^2(\Om) \},
\ee   
endowed with the operator norm $\| u \|_{D((-\DD)^s)}^2 = \| u \|_{L^2(\Om)}^2 + \| (-\DD)^s u \|_{L^2(\Om)}^2$.  Furthermore, by standard theory, it follows that the spectrum of $L$ is discrete and given by a nondecreasing sequence of eigenvalues $0 < \lambda_1 < \lambda_2 \leq \lambda_3 \leq \ldots$ such that $\lambda_k \to +\infty$ as $k \to \infty$. In particular, the resolvent $L^{-1}$ is a compact operator on $L^2(\Om)$. Since $L$ is self-adjoint, it follows from Stone's theorem that $L$ generates a unitary group of isometries $\{ e^{-it L} \}_{t \in \R}$ on any of the Hilbert spaces $X \in \{ L^2(\Om), H_0^s(\Om), D(L) \}$. Hence we say that $u \in C([0,T); X)$ is a solution of \eqref{eq:fnlsO} if $u(t)$ solves the corresponding integral equation
\be
u(t) = e^{-it L} u_0 + i \int_0^t e^{-i(t-s)L} ( |u(s)|^{2 \sigma} u(s)) \, ds \quad \mbox{for $t \in [0,T)$}.
\ee
As in the case $\R^N$, we shall not study the local well-posedness theory for \eqref{eq:fnlsO} in the spaces $D(L^{1/2})=H^s_0(\Om)$ or $D(L)=D((-\DD)^s)$. 
 
Whereas the characterization of the form domain $D(L^{1/2})=H^s_0(\Om)$ is simple, the study of the operator domain $D(L)=D((-\DD)^s)$ turns out to be rather intricate. In her recent work \cite{Gr-15}, Grubb has proven (by partly building upon work of H\"ormander \cite{Ho-65}) that the operator domain is given by
$$
D((-\DD)^s) = H^{s(2s)}_2(\overline{\Om}),
$$
where  $H^{\mu(\nu)}_2(\overline{\Om})$ denotes the so-called $\mu$-transmission Sobolev space introduced by H\"ormander, indexed by $\mu \in \C$ and $\nu \in \R$ satisfying $\nu > \mathrm{Re} \, \mu -1/2$. But we will not be concerned with the fine properties of the spaces $H^{\mu(\nu)}_2$ as provided in \cite{Gr-15}; in particular, we only need the embedding $H^{s(2s)}_2(\overline{\Om}) \subset H^1_0(\Om)$ when $s > 1/2$.

Let us now assume that $u \in C([0,T); D((-\DD)^s))$ is a solution of \eqref{eq:fnlsO}. A well-defined calculation using the regularity of $u(t)$ then yields conservation of {\em energy}\footnote{Formally, the conservation of energy also holds for solutions $u \in C([0,T);  H_0^s(\Om))$. However, since we do not study the local well-posedness and approximation theory here, we rather prefer to work with operator domain-valued solutions $u \in C([0,T); D((-\DD)^s))$.}
\be
E_{\Om}[u(t)] = \frac{1}{2} \int_{\Om} \overline{u}(t) (-\DD)^s u(t) \, dx - \frac{1}{2 \sigma+2} \int_{\Om} |u(t)|^{2 \sigma+2} \, dx ,
\ee
and {\em $L^2$-mass} given by
\be
M_{\Om}[u(t)] = \int_{\Om} |u(t)|^2 \, dx .
\ee 

We can now  state the following blowup result concerning problem \eqref{eq:fnlsO} for star-shaped (in particular, convex) domains $\Om$.

\begin{theorem} \label{thm:dom}
Let $N \geq 1$, $s \in (1/2,1)$, and $0 < s_c \leq s$. Assume that $\Om \subset \R^N$ is a bounded and star-shaped domain with smooth boundary $\pt \Om$. Suppose that $u \in C \big ([0,T); D((-\DD)^s) \big )$ is a solution of \eqref{eq:fnlsO} with negative energy
$$E_\Om[u_0] < 0.$$ 
Then $u(t)$ blows up in finite time in the sense that $T < +\infty$ must hold.
\end{theorem}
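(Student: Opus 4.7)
I would introduce the virial moment
\[
I(t) := 2\,\mathrm{Im}\int_\Om \overline{u}(t,x)\,x\cdot\nabla u(t,x)\,dx.
\]
This is well-defined and $C^1$ on $[0,T)$: by Grubb's operator-domain regularity $D((-\DD)^s) \hookrightarrow H^1_0(\Om)$ (valid since $s > 1/2$), we have $\nabla u(t) \in L^2(\Om)$, and $|x|$ is bounded on the bounded set $\Om$.

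The heart of the argument is a virial identity derived from a Pohozaev-type estimate. Differentiating $I(t)$ along \eqref{eq:fnlsO}, a routine integration by parts (using $u(t)\in H^1_0(\Om)$) handles the nonlinear contribution, while the linear contribution reduces to $2\,\mathrm{Re}\int_\Om (x\cdot\nabla u)(-\DD)^s\overline u\,dx$. For this I invoke the Ros-Oton--Serra Pohozaev identity for the restricted fractional Laplacian on a Dirichlet-exterior domain: for $v \in D((-\DD)^s)$,
\[
2\,\mathrm{Re}\!\int_\Om (x\cdot\nabla v)(-\DD)^s\overline v\,dx = (2s-N)\!\int_\Om v\,(-\DD)^s\overline v\,dx - \Gamma(1+s)^2\!\int_{\pt\Om}(x\cdot\nu)\left|\tfrac{v}{d^s}\right|^2 d\sigma,
\]
where $d(x) = \mathrm{dist}(x,\pt\Om)$ and $\nu$ is the outward unit normal. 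The key geometric input is star-shapedness of $\Om$ (about the origin, WLOG), which ensures $x\cdot\nu\geq 0$ on $\pt\Om$ so the boundary integral has a favorable sign. Substituting the conserved energy for the $L^{2\sigma+2}$ contribution yields
\[
\frac{d}{dt}I(t) \leq 4\sigma N\,E_\Om[u_0] - 2(\sigma N - 2s)\,\|(-\DD)^{s/2}u(t)\|_{L^2(\Om)}^2,
\]
and under the hypotheses $E_\Om[u_0]<0$ and $s_c>0$ (so $\sigma N-2s>0$) both summands on the right are nonpositive, giving the key first-order bound $I'(t) \leq 4\sigma N\,E_\Om[u_0] < 0$.

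To convert this into $T<+\infty$, I would use the bounded nonnegative second moment $V(t) := \int_\Om |x|^2|u(t,x)|^2\,dx \in [0,R^2 M_\Om[u_0]]$ with $R := \mathrm{diam}(\Om)$ as the obstruction. In the local case $s=1$ one has the clean identity $V'(t) = 2I(t)$, so $V''(t) \leq 8\sigma N\,E_\Om[u_0] < 0$ and the quadratic envelope $V(t) \leq V(0) + V'(0)t + 4\sigma N E_\Om[u_0]\,t^2$ must eventually violate $V \geq 0$, yielding $T < +\infty$ as in Kavian's classical argument. For $s\in (1/2,1)$ this identity breaks down, and I would compute $V''(t)$ directly via the flow; the main obstacle is the appearance of the nonlocal commutator $[(-\DD)^s,|x|^2]$, which must be analyzed (e.g.\ through a Balakrishnan-type representation) while tracking the boundary contributions. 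A second application of the Pohozaev-type estimate should then confirm that under star-shapedness the commutator continues to produce a concave contribution, namely $V''(t) \leq c_1 E_\Om[u_0] - c_2 \|(-\DD)^{s/2}u(t)\|_{L^2}^2 < 0$ for positive constants $c_1,c_2$, thereby closing the contradiction $V(t)\geq 0$ vs.\ $V(t)\to-\infty$. The hard part will be this nonlocal commutator analysis, which is precisely where the fractional setting departs from Kavian's original proof for the local Laplacian.
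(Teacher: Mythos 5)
Your setup and the derivation of the key differential inequality for $I(t) = \MM_\Om[u(t)]$ match the paper's: same virial moment, same regularity justification via Grubb's characterization $D((-\DD)^s)\subset H^1_0(\Om)$, and same use of a Pohozaev-type bound for the term $\mathrm{Re}\int_\Om (x\cdot\nabla u)(-\DD)^s\overline u\,dx$, with the sign of $x\cdot\nu$ under star-shapedness playing the decisive role. (One minor point: the paper deliberately proves only a Pohozaev \emph{inequality}, via a Cauchy--Schwarz bound on a dilation pairing $I_\lambda \leq I_1$, rather than invoking the full Ros-Oton--Serra identity with the explicit boundary term $\int_{\pt\Om}(x\cdot\nu)|u/d^s|^2$; the latter requires Hopf-type boundary regularity of $u/d^s$ that is not established for general $u\in D((-\DD)^s)$, whereas the inequality form suffices and avoids this.)

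The genuine gap is in the closing step. You propose to convert the bound $I'(t)\leq 4\sigma N E_\Om[u_0]<0$ into a contradiction via the second moment $V(t)=\int_\Om |x|^2|u|^2\,dx$, hoping to show $V''(t)<0$ and then argue concavity vs.\ nonnegativity as in Kavian's proof for $s=1$. You yourself flag that for $s\in(1/2,1)$ the relation $V'(t)=2I(t)$ fails, and that one must instead analyze the commutator $[(-\DD)^s,|x|^2]$; you then assert that ``a second application of the Pohozaev-type estimate should confirm'' a concave contribution. This is exactly the point the paper identifies as an obstruction it does not overcome: as explained in the introduction, the correct substitute for the variance is $\|x(-\DD)^{(1-s)/2}u\|_{L^2}^2$, whose time evolution for local nonlinearities produces error terms that are ``very hard to control,'' and no such control is claimed or proved here. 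Your proposed second step is therefore not a proof; it is a plan that breaks down precisely where you say the hard part is.

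The paper instead closes the argument without any second moment. Keep the full inequality $I'(t)\leq 4\sigma N E_\Om[u_0]-2\delta\int_\Om \overline u(-\DD)^s u$ (do not drop the quadratic term), integrate it once to get $I(t)\leq -\delta\int_{t_1}^t\int_\Om \overline u(-\DD)^s u\,dx\,d\tau\leq 0$ for $t\geq t_1$, and pair this with the a priori bound $|I(t)|\lesssim(\int_\Om\overline u(-\DD)^s u\,dx)^{1/2s}$ that follows from Lemma~\ref{lem:MR_estimate}, the $H^{1/2}\text{--}H^s$ interpolation, mass conservation, and the uniform lower bound $\int_\Om\overline u(-\DD)^s u\gtrsim 1$ forced by $E_\Om[u_0]<0$. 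This yields the nonlinear integral inequality $I(t)\lesssim -C\int_{t_1}^t|I(\tau)|^{2s}\,d\tau$, which, since $2s>1$, forces $I(t)\to-\infty$ in finite time. That is the Ogawa--Tsutsumi mechanism, and it replaces the missing variance argument entirely.
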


\begin{remarks*}
{\em 1) In contrast to Theorem \ref{thm:L2super}, we do not impose a symmetry condition on $u(t)$. In addition, the one-dimensional case $N=1$ when $\Om = (a,b)$ is a bounded open interval is covered.

2) The proof of Theorem \ref{thm:dom} extends  {\em formally (at least)} to the half-wave case $s=1/2$ and leads to an infinite-time blowup result with exponential growth. However, a delicate domain/regularity issue of $D((-\DD)^s)$ for $s=1/2$ prevents us from  doing so. For more details, see the remark following the proof of Theorem \ref{thm:dom} below. }
\end{remarks*}

\subsection*{Comments on the Proof of Theorem 2}
The proof of Theorem \ref{thm:dom}  uses the time evolution of the full virial 
$$
\MM_{\Om}[u(t)] = 2 \, \mathrm{Im} \int_{\Om} \overline{u}(t) (x \cdot \nabla u(t)) \, dx.
$$
Since $x$ is a bounded function on $\Om$, there is no need to introduce a cutoff function and, moreover, we do not have to use spatial decay estimates for $u(t)$ (and hence impose radiality), since $\Om$ is bounded. However, the study of the time derivative of $\MM_{\Om}[u(t)]$ will involve a boundary term, whose sign will turn out to be  favorable if $\Om$ is star-shaped. This is a similar observation used in \cite{Ka-87} where blowup for classical NLS posed on domains is proved. A delicate point in the argument is to have the right substitute for an integration by parts formula for the nonlocal operator $(-\DD)^s$ on $\Om$. To handle this, we make use of a recent idea developed by X.~Ros-Oton and J.~Serra \cite{RoSe-14}, where a Pohozaev identity for the fractional Laplacian on bounded domains was derived.

\subsection*{Notation and Conventions} We write $X \lesssim Y$ to denote that $X \leq C Y$ with some constant $C >0$ that only depends on the fixed quantities $u_0$, $N$, $s$, $\sigma$, and some fixed cutoff function. Moreover, we employ the notation $X = \cO(Y)$ by which we mean that $|X| \lesssim Y$ holds. We use the standard convention by summing over repeated indices, e.\,g.,  ~$x_i y_i \equiv \sum_{i=1}^N x_i y_i$.

\subsection*{Acknowledgments} The authors gratefully acknowledge financial support by the Swiss National Science Foundation (SNF) through Grant No.~200021--149233. E.\,L.~also thanks G.~Grubb for a helpful correspondence about her results in \cite{Gr-15}.

\section{Localized Virial Estimate for Fractional NLS}

In this section, we derive localized virial estimates for radial solutions of fractional NLS. First, we derive a general virial formula for solutions $u(t,x)$ that are not necessarily radial. Then, we sharpen the estimates in the class of radial solutions.

\subsection{A General Virial Identity}
Let $N \geq 1$, $s \in [1/2,1)$, and $\sigma > 0$. Throughout the rest of this section, we assume that 
$$
u \in C \big ([0,T); H^{2s}(\R^N) \cap L^{2 \sigma +2}(\R^N) \big)
$$
is a solution of \eqref{eq:fNLS}. Note that, at this point, we do not impose any symmetry assumption on the solution $u(t,x)$. Note also that for $u(t) \in H^{2s}(\R^N)$, conservation of energy $E[u]$ and $M[u]$ follows directly by integrating the equation against $\pt_t {\overline{u}}(t)$ and $\overline{u}(t)$, respectively. There is no need for an approximation argument in order to have well-defined pairings.

Of course, if the exponent $\sigma$ is not $H^{2s}$-supercritical (in particular if $s_c \leq s$), the condition $u \in C([0,T); L^{2 \sigma+2}(\R^N))$ is superfluous by Sobolev embeddings. Furthermore, we remark the following localized virial identities could be extended to $u \in C([0,T); H^{s}(\R^N))$, provided we have a decent local well-posedness theory in $H^s(\R^N)$. However, as pointed out in the introduction, we prefer to work with strong $H^{2s}$-valued solutions for \eqref{eq:fNLS} in order to guarantee that the following calculations are well-defined a-priori.

Let us assume that $\phi : \R^N \to \R$ is a real-valued function with $\nabla \phi \in W^{3,\infty}(\R^N)$. We define the {\bf localized virial} of $u=u(t,x)$ to be the quantity given by
\be
\MM_{\phi}[u(t)] := 2 \, \mathrm{Im}  \int_{\R^N} \overline{u}(t) \nabla \phi \cdot \nabla u(t) \, dx = 2 \, \mathrm{Im} \int_{\R^N} \overline{u}(t) \pt_k \phi \pt_k u(t) \, dx .
\ee
Recall that we use the convention by summing over repeated indices from $1$ to $N$. By applying Lemma \ref{lem:MR_estimate}, we obtain the bound 
$$
|\MM_{\phi}[u(t)]| \lesssim C(\| \nabla \phi \|_{L^\infty}, \| \Delta \phi \|_{L^{\infty}}) \| u(t) \|_{H^{1/2}}^2 .
$$
Hence the quantity $\MM_{\phi}[u(t)]$ is well-defined, since $u(t) \in H^{s}(\R^N)$ with some $s \geq 1/2$ by assumption. 

To study the time evolution of $\MM_{\phi}[u(t)]$, we shall need the following auxiliary function $u_m=u_m(t,x)$ that is defined as
\be \label{def:um}
u_m (t) := c_s \frac{1}{-\DD + m} u(t) = c_s \mathcal{F}^{-1} \left ( \frac{\widehat{u}(t,\xi)}{|\xi|^2+m}  \right ) \quad \mbox{with $m >0$} ,
\ee
where the constant 
\be \label{def:cs}
c_s := \sqrt{ \frac{\sin \pi s}{\pi }}
\ee 
turns out to be a convenient normalization factor. By the smoothing properties of $(-\DD +m)^{-1}$, we clearly have that $u_m(t) \in H^{\alpha + 2}(\R^N)$ holds for any $t \in [0,T)$ whenever $u(t) \in H^\alpha(\R^N)$.

\begin{lemma} \label{lem:virial_gen}
For any $t \in [0,T)$, we have the identity
\begin{align*}
\frac{d}{dt} \MM_{\phi}[u(t)] & =    \int_0^\infty m^s  \int_{\R^N} \left \{  4   \, \overline{\pt_k u_m} (\pt^2_{kl} \phi) \pt_l u_m  -   (\DD^2 \phi)  |u_m|^2 \right \} \, dx \, d m \\
& \quad -  \frac{2 \sigma}{\sigma+1} \int_{\R^N} (\DD \phi) |u|^{2 \sigma +2 } \, dx ,
\end{align*}
where $u_m=u_m(t,x)$ is defined in \eqref{def:um} above.
\end{lemma}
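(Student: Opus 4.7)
The plan is to recognize $\MM_\phi[u]$ as a self-adjoint bilinear form, differentiate in time via a Heisenberg-type commutator identity, and then use Balakrishnan's formula \eqref{eq:Balakintro} to pull the nonlocal operator $(-\DD)^s$ apart into a continuous superposition of local commutators with $-\DD$, evaluated on the smoothed function $u_m$. Concretely, I would first rewrite $\MM_\phi[u] = \langle u, \Gamma u\rangle$ where $\Gamma := -i(2\nabla\phi\cdot\nabla + \DD\phi)$ is formally self-adjoint on $L^2(\R^N)$. Using \eqref{eq:fNLS} in the form $i\pt_t u = Hu$ with $H = (-\DD)^s - |u|^{2\sigma}$, and noting that the assumption $u \in C([0,T); H^{2s} \cap L^{2\sigma+2})$ makes every pairing below well-defined without approximation, a direct product-rule computation gives
\begin{equation*}
\frac{d}{dt}\MM_\phi[u(t)] = -i\langle u, [\Gamma, H]u\rangle = -i\langle u, [\Gamma, (-\DD)^s]u\rangle + i\langle u, [\Gamma, |u|^{2\sigma}]u\rangle.
\end{equation*}

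For the nonlinear commutator, since $|u|^{2\sigma}$ is a multiplication operator, a direct computation yields $[\Gamma, -|u|^{2\sigma}]u = 2i\,\nabla\phi\cdot\nabla(|u|^{2\sigma})\,u$. Multiplying by $-i\bar u$, integrating, and using $|u|^2\nabla(|u|^{2\sigma}) = \tfrac{\sigma}{\sigma+1}\nabla(|u|^{2\sigma+2})$ followed by one integration by parts produces the contribution $-\tfrac{2\sigma}{\sigma+1}\int(\DD\phi)|u|^{2\sigma+2}\,dx$, matching the second term in the lemma.

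For the kinetic commutator, I would combine Balakrishnan's identity with the resolvent identity
\begin{equation*}
[\Gamma, (-\DD+m)^{-1}] = -(-\DD+m)^{-1}[\Gamma, -\DD](-\DD+m)^{-1}
\end{equation*}
and the relation $\tfrac{-\DD}{-\DD+m} = I - m(-\DD+m)^{-1}$ to obtain
\begin{equation*}
[\Gamma, (-\DD)^s] = \frac{\sin\pi s}{\pi}\int_0^\infty m^s\,(-\DD+m)^{-1}[\Gamma, -\DD](-\DD+m)^{-1}\,dm.
\end{equation*}
Sandwiching between two copies of $u$, the self-adjointness of $(-\DD+m)^{-1}$ and the normalization $c_s^2 = \sin\pi s/\pi$ cancel the external prefactor, leaving
\begin{equation*}
-i\langle u, [\Gamma, (-\DD)^s]u\rangle = \int_0^\infty m^s\bigl(-i\langle u_m, [\Gamma, -\DD]u_m\rangle\bigr)\,dm.
\end{equation*}
The inner quantity is the classical local-NLS virial integrand: expanding $[\Gamma, -\DD]v$ via Leibniz on $v = u_m \in H^{2s+2}(\R^N)$ gives $-i(4\,\nabla\DD\phi\cdot\nabla v + 4\,\pt_k\pt_l\phi\,\pt_k\pt_l v + \DD^2\phi\,v)$; pairing with $\bar v$ and shifting one derivative off the Hessian term via integration by parts produces a $+4\int\bar v\,\nabla\DD\phi\cdot\nabla v$ that cancels the first contribution, leaving the manifestly real expression $\int[4\,\overline{\pt_k v}\,\pt_k\pt_l\phi\,\pt_l v - \DD^2\phi\,|v|^2]\,dx$ appearing in the lemma.

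The main technical obstacle is to legitimize the interchange of the commutator and the $m$-integral and to apply Fubini on the final double integral in $(m,x)$. This is controlled by the bound $u_m \in H^{2s+2}(\R^N)$ coming from elliptic regularity of $(-\DD+m)^{-1}$ on $u \in H^{2s}$, together with the Fourier-side estimate $\int_0^\infty m^s\,|\xi|^2/(|\xi|^2+m)^2\,dm \lesssim |\xi|^{2s}$ (valid for $s \in (0,1)$), which show that the $m$-integrand is absolutely integrable uniformly in $t$ on any compact subinterval of $[0,T)$; once this is in place, all the formal manipulations above are rigorous.
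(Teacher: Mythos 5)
Your overall strategy is the one the paper uses: represent $\MM_\phi[u]$ as the quadratic form $\langle u,\Gamma u\rangle$ for the self-adjoint first-order operator $\Gamma$, compute the time derivative via the Heisenberg commutator, and trade the nonlocal commutator $[\Gamma,(-\DD)^s]$ for a continuum of local commutators with $-\DD$ by way of Balakrishnan's formula together with the resolvent identity $[\frac{A}{A+m},B]=m\frac{1}{A+m}[A,B]\frac{1}{A+m}$. Your expansion of $[\Gamma,-\DD]$ and the subsequent integration by parts that cancels the $\nabla\DD\phi$ terms is also exactly the same algebra, and the treatment of the nonlinear commutator matches. So the route is not genuinely different; the difference lies entirely in the last, technical paragraph, where there is a real gap.

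Your rigorization argument only controls \emph{half} of the $m$-integrand. The Fourier-side estimate $\int_0^\infty m^s |\xi|^2/(|\xi|^2+m)^2\,dm \simeq |\xi|^{2s}$ does justify absolute convergence of the Hessian part $\int_0^\infty m^s\int 4\,\overline{\pt_k u_m}(\pt^2_{kl}\phi)\pt_l u_m\,dx\,dm$, since this reduces to $\lesssim \|\pt^2\phi\|_{L^\infty}\|(-\DD)^{s/2}u\|_{L^2}^2$. But for the $\DD^2\phi\,|u_m|^2$ piece the same reasoning gives $\int_0^\infty m^s\|u_m\|_{L^2}^2\,dm \simeq \||\nabla|^{s-1}u\|_{L^2}^2$, a \emph{negative}-order homogeneous Sobolev norm for $s<1$. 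That quantity is not controlled by $u\in H^{2s}(\R^N)$ (nor by $L^2$-mass conservation), since membership in $H^{2s}$ gives no information about low-frequency decay of $\widehat{u}$; hence absolute integrability of the $m$-integral cannot be read off the Fourier representation alone, and Fubini is not yet justified. The paper remarks on precisely this trap after Lemma~\ref{lem:u_m_inter} and resolves it by a different estimate: split the $m$-integral at a threshold $\Lambda$, in the low-$m$ region integrate by parts in $x$ twice to trade $\DD^2\phi$ for $\DD\phi$ (using $\|\DD u_m\|_{L^2}\lesssim\|u\|_{L^2}$, $\|\nabla u_m\|_{L^2}\lesssim m^{-1/2}\|u\|_{L^2}$), and in the high-$m$ region use $\|u_m\|_{L^2}\lesssim m^{-1}\|u\|_{L^2}$; optimizing over $\Lambda$ yields $|\int_0^\infty m^s\int(\DD^2\phi)|u_m|^2\,dx\,dm|\lesssim\|\DD^2\phi\|_{L^\infty}^s\|\DD\phi\|_{L^\infty}^{1-s}\|u\|_{L^2}^2$. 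This is the bound that makes Fubini and (in the paper's presentation) the $C^\infty_c\to H^{2s}$ density passage legitimate. You need to invoke this estimate, or reproduce it, before your ``once this is in place, all the formal manipulations above are rigorous'' can be asserted; the hypothesis $\nabla\phi\in W^{3,\infty}$ is there precisely to make it available.
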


\begin{remarks*}
{\em 1) If we make formal substitution and take the unbounded function $\nabla \phi(x) = x$, we have $\pt_r^2 \phi \equiv 1$ and $\DD^2 \phi \equiv 0$. By applying the identity
$$
\int_0^\infty m^s \int_{\R^N} |\nabla u_m|^2 \, dx \, dm = s \| (-\DD)^{s/2} u \|_{L^2}^2
$$ 
for any $u \in \dot{H}^{s}(\R^N)$ (see \eqref{eq:Plancherel} below), we find the formal virial identity \eqref{eq:virial} by an elementary calculation.

2) From the proof given below and Lemma \ref{lem:u_m_inter}, we deduce the bound
$$
\begin{aligned}
\left | \int_0^\infty m^s  \int_{\R^N} \left \{  4   \, \overline{\pt_k u_m} (\pt^2_{kl} \phi) \pt_l u_m  -   (\DD^2 \phi)  |u_m|^2 \right \} \, dx \, d m \right |  \\ \lesssim  \| \nabla^2 \phi \|_{L^\infty} \| (-\DD)^{s/2} u \|_{L^2}^2  + \| \DD^2 \phi \|_{L^\infty}^{s} \| \DD \phi \|_{L^\infty}^{1-s} \| u \|_{L^2}^2 \lesssim C \| u \|_{H^s}^2,
\end{aligned}
$$
with some constant $C >0$ depending only on $\| \nabla \phi \|_{W^{3,\infty}}$.

3) The usage of the auxiliary function $u_m$ and Balakrshinan's representation formula \eqref{eq:Balakrish} for $(-\DD)^s$ is partly inspired by the joint work \cite{KrLeRa-13} of the third author. In \cite{KrLeRa-13}, the use of $u_m$ turns out to be helpful to show certain coercivity properties for the perturbative construction of minimal mass blowup solutions for the cubic half-wave equation in $N=1$ dimension.
}
\end{remarks*}

\begin{proof}[Proof of Lemma \ref{lem:virial_gen}] Define the (formally) self-adjoint differential operator
$$
\Gamma_{\phi} := -i ( \nabla \cdot \nabla \phi + \nabla \phi \cdot \nabla ),
$$
which acts on functions according to
$$
\Gamma_{\phi} f = -i \left ( \nabla \cdot ((\nabla \phi) f) + (\nabla \phi) \cdot ( \nabla f) \right ).
$$
We readily check that
$$
\MM_{\phi}[u(t)] = \langle u(t), \Gamma_{\phi} u(t) \rangle .
$$
By taking the time derivative and using the equation satisfied by $u(t)$, we get
\be \label{eq:Mder0}
\frac{d}{dt} \MM_{\phi}[u(t)] = \left \langle u(t), [ (-\DD)^s, i \Gamma_{\phi}] u(t) \right \rangle +  \left \langle u(t), [ -|u|^{2 \sigma}, i \Gamma_{\phi}] u(t) \right  \rangle,
\ee
where we recall that $[X,Y] \equiv XY - YX$ denotes the commutator of $X$ and $Y$.

By our regularity assumption on $u(t)$, we have $(-\DD)^s u(t) \in L^2(\R^N)$ and $\Gamma_{\phi} u(t) \in H^{2s-1}(\R^N) \subset L^2(\R^N)$ for $s \geq 1/2$. In particular, the terms above are well-defined a-priori. Next, we discuss the terms on the right side of \eqref{eq:Mder0} separately as follows. For notational ease, we simply write $u$ instead of $u(t)$ and $u(t,x)$ in what follows.

\medskip
{\bf Step 1 (Dispersive Term).} For $s \in (0,1)$, we have the formula
\be \label{eq:Balakrish}
(-\DD)^s = \frac{ \sin \pi s}{\pi} \int_0^\infty m^{s-1} \frac{-\DD}{-\DD+ m} \, dm,
\ee
which follows from spectral calculus applied to the self-adjoint operator $-\DD$ and the formula $x^s = \frac{\sin \pi s}{\pi} \int_0^\infty m^{s-1} \frac{x}{x+m} \, dm $ valid for any real number $x> 0$ and $s \in (0,1)$. In semigroup theory, the formula \eqref{eq:Balakrish} usually goes by the name {\em Balakrishnan's formula.} Next, we note the formal identity 
\be \label{eq:commId}
\left [\frac{A}{A+m},B \right ]  = \left [ \mathds{1} - \frac{m}{A+m}, B \right ] = -m \left [ \frac{1}{A+m}, B \right ] = m \frac{1}{A+m} [A,B] \frac{1}{A+m} ,
\ee
for operators $A \geq 0$ and $B$,  where $m > 0$ is any  positive real number.  By combining \eqref{eq:Balakrish} and \eqref{eq:commId} with $A=-\DD$, we obtain the formal commutator identity
\be \label{eq:Balabala1}
[(-\DD)^s, B ] = \frac{\sin (\pi s)}{\pi} \int_0^\infty m^{s} \frac{1}{-\DD + m} [-\DD, B] \frac{1}{-\DD+m} \, dm 
\ee
for any operator $B$. Next, we apply this identity to $B= i \Gamma_{\phi}$ and we use that
\be \label{eq:Balabala2}
[-\DD, i \Gamma_{\phi}] = - 4 \pt_k ( \pt^2_{kl} \phi ) \pt_l - \DD^2 \phi,
\ee
which follows from a direct calculation using the Leibniz rule. 

Let us now apply the formal identities above to the situation at hand. Indeed, let us first assume that $u \in C^\infty_c(\R^N)$ holds. We claim that
\be \label{eq:Bala1}
\langle u, [(-\DD)^s, i \Gamma_\phi] u \rangle = \int_0^\infty m^s \int_{\R^N}  \left \{ 4 \, \overline{\pt_k u_m} ( \pt^2_{kl}    \phi ) \pt_l u_m - (\DD^2 \phi) |u_m|^2 \right \} \, dx \, dm,
\ee
where $u_m = c_s (-\DD+m)^{-1} u$ with $m>0$ and the constant $c_s >0$ is defined in \eqref{def:cs}.  Now, for $u \in C^\infty_c(\R^N)$, we can readily apply formula \eqref{eq:Balakrish} (where the $m$-integral is a convergent Bochner integral) to express $(-\DD)^s u$. Furthermore, it is legitimate to use \eqref{eq:Balabala1} with \eqref{eq:Balabala2} and, by Fubini's theorem, we arrive at \eqref{eq:Bala1} provided that $u \in C^\infty_c(\R^N)$.

As a next step, we extend the identity \eqref{eq:Bala1} to any $u \in H^{2s}(\R^N)$ by the the following approximation argument. Let $u_n \in C^\infty_c(\R^N)$ be a sequence such that $u_n \to u$ strongly in $H^{2s}(\R^N)$. We easily see that $\langle u_n, [(-\DD)^s, i \Gamma_\phi] u_n \rangle \to \langle u, [(-\DD)^s, i \Gamma_\phi] uÊ\rangle$, which yields the left-hand side of \eqref{eq:Bala1}. Next, we claim that
\be \label{eq:conv11}
\lim_{n \to \infty} G[u_n, u_n] = G[u,u],
\ee
where we define the bilinear form
$$
G[f,g] := \int_0^\infty m^s \int_{\R^N} \overline{\pt_k f_m} (\pt^2_{kl} \phi) \pt_l g_m \, dx \, dm
$$
with $f_m = c_s(-\DD+m)^{-1}$ and $g_m = c_s (-\DD+m)^{-1} g$. Since $u_n \to u$ strongly in $H^{2s}(\R^N)$, the convergence \eqref{eq:conv11} clearly follows if we can show that
\be \label{eq:Gfg}
\left | G[f,g] \right | \lesssim \| \pt_{kl}^2 \phi \|_{L^\infty} \| (-\DD)^{s/2} f \|_{L^2} \| (-\DD)^{s/2} g \|_{L^2} .
\ee
To prove \eqref{eq:Gfg}, we first note that,  by using Plancherel's and Fubini's theorem, 
\be \label{eq:Plancherel}
\begin{aligned}
& \int_0^\infty m^s \int_{\R^N} |\nabla f_m|^2 \,dx \, dm = \int_{\R^N} \left ( \frac{\sin  \pi s}{\pi} \int_0^\infty \frac{m^s \, dm}{(|\xi|^2+m)^2}  \right ) |\xi|^2 |\widehat{f}(\xi)|^2 \, d \xi \\
& \phantom{\int_0^\infty m^s \int_{\R^N} |\nabla f_m|^2 \,dx \, dm} = \int_{\R^N} \left ( s |\xi|^{2s-2}\right ) |\xi|^2 |\widehat{f}(\xi)|^2 \, d \xi = s \| (-\DD)^{s/2} f \|_{L^2}^2
\end{aligned}
\ee
for arbitrary $f \in \dot{H}^s(\R^N)$. Next, we introduce the bilinear form
$$
H[f,g] := G[f,g] + \mu s \int_{\R^N} \overline{f} (-\DD)^s g \,dx \ \ \mbox{with} \ \ \mu := \text{ess-sup}_{x \in \R^N} \left \| ( \pt_{kl}^2 \phi)(x) \right  \|,
$$ 
where $\| A \|$ denotes the operator norm of a matrix $A \in \R^{N \times N}$. Thus from \eqref{eq:Plancherel} and by using the pointwise lower bound $\overline{ \pt_k f_m} (\pt_{kl}^2 \phi) \pt_l f_m \geq - \mu |\nabla f_m|^2$ we obtain that
$$
H[f,f] \geq - \mu \int_0^\infty m^s \int_{\R^N} |\nabla f_m|^2 \, dx \, dm + \mu s \| (-\DD)^{s/2} f \|_{L^2}^2 = 0.
$$
On the other hand, we have $\mu \lesssim \| \pt_{kl}^2 \phi \|_{L^\infty}$ and thus
$$
H[f,f] \lesssim \| \pt_{kl}^2 \phi \|_{L^\infty} \| (-\DD)^{s/2} f \|_{L^2}^2 .
$$
Since $H[f,g]$ is positive semidefinite, we have the Cauchy-Schwarz inequality $\left | H[f,g] \right | \leq \sqrt{H[f,f]} \sqrt{H[g,g]}$. Consequently, we deduce
\begin{align*}
\left | G[f,g] \right | & \leq \sqrt{ H[f,f]} \sqrt{H[g,g]} + \mu s \| (-\DD)^{s/2} f \|_{L^2} \| (-\DD)^{s/2} g \|_{L^2} \\ &
\lesssim \| \pt_{kl}^2 \phi \|_{L^\infty} \| (-\DD)^{s/2} f \|_{L^2} \| (-\DD)^{s/2} g \|_{L^2},
\end{align*}
which is the desired bound \eqref{eq:Gfg}.

To complete the proof of \eqref{eq:Bala1} for $u \in H^{2s}(\R^N)$, we need to show that
\be \label{eq:conv22}
\lim_{n \to \infty} K[u_n, u_n] = K[u,u]
\ee
for the bilinear form
$$
K[f,g] := \int_{0}^\infty m^s \int_{\R^N} (\DD^2 \phi) \overline{f_m} g_m \, dx \, dm.
$$
Indeed, by following the arguments in the proof of Lemma \ref{lem:u_m_inter}, we obtain
$$
\left | K[f,g] \right | \lesssim \| \DD^2 \phi \|_{L^\infty}^s \| \DD \phi \|_{L^\infty}^{1-s} \| f \|_{L^2} \| g \|_{L^2},
$$
from which we readily deduce that \eqref{eq:conv22} holds.

[In fact, the previous arguments allow us to extend identity \eqref{eq:Bala1} to any $u \in H^s(\R^N)$. However, as previously remarked, the extension of the identity in Lemma \ref{lem:virial_gen} to $H^s$-valued solutions $u(t)$ would require an approximation argument by $H^{2s}$-valued solutions $u(t)$, which we do not study here.]

\medskip
{\bf Step 2 (Nonlinear Term).} This part of the proof is analogous to the classical NLS. In fact, an integration by parts yields 
\begin{align*}
\left \langle u, [-|u|^{2 \sigma}, i \Gamma_\phi] u \right \rangle & = - \left \langle u, [|u|^{2 \sigma},  \nabla \phi \cdot \nabla + \nabla \cdot \nabla \phi] u \right \rangle \\
& = 2 \int_{\R^N} |u|^2 \nabla \phi \cdot  \nabla (|u|^{2 \sigma}) =  - \frac{2 \sigma}{\sigma+1} \int_{\R^N} (\DD \phi) |u|^{2 \sigma +2}, 
\end{align*}
where we also made use of the identity $\nabla (|u|^{2 \sigma+2}) = \frac{\sigma+1}{\sigma} \nabla (|u|^{2 \sigma}) |u|^2$.

This completes the proof of Lemma \ref{lem:virial_gen}.
\end{proof}

\subsection{Localized Virial Estimate for Radial Solutions} \label{subsec:MR}
We now apply the previous formula for $\MM_{\phi}[u(t)]$ when $\phi(x)$ is a suitable approximation of the unbounded function $a(x)=\frac{1}{2} |x|^2$ and hence $\nabla a(x) = x$. This choice will yield a localized virial identity that will be used to prove blowup for radial solutions of fractional NLS.

Let $\phi : \R^N \to \R$ be  as above. In addition, we assume that $\phi=\phi(r)$ is radial and satisfies 
\be  \label{def:phi}
\phi(r) = \begin{dcases*} r^2/2 & \quad for $r \leq 1$ \\ \mbox{const}. & \quad for $r \geq 10$ \end{dcases*} \quad \mbox{and} \quad  \mbox{$\phi''(r) \leq 1$ for $r \geq 0$}.
\ee
For $R > 0$ given, we define the rescaled function $\phi_R : \R^N \to \R$ by setting
\be
\phi_R(r) := R^2 \phi \left ( \frac{r}{R} \right ) .
\ee
We readily verify the inequalities
\be \label{ineq:phi1}
1 - \phi_R''(r) \geq 0, \quad 1- \frac{\phi_R'(r)}{r} \geq 0, \quad N - \DD \phi_R(r) \geq 0 \quad \mbox{for all $r \geq 0$}.
\ee
Indeed, this first inequality follows from $\phi_R''(r) = \phi''(r/R) \leq 1$. We obtain the second inequality by integrating the first inequality on $[0,r]$ and using that $\phi_R'(0) = 0$. Finally, we find that $N - \DD \phi_R(r) = 1-\phi_R''(r) + (N-1)\{1 - \frac{1}{r} \phi_R'(r)\} \geq 0$ holds thanks to the first two inequalities in \eqref{ineq:phi1}.  

For later use, we record the following properties of $\phi_R$, which can be easily checked:
\be \label{eq:phi2}
\left \{
\begin{aligned}
& \nabla \phi_R(r) =   R \phi' \left ( \frac{r}{R} \right ) \frac{x}{|x|} = \begin{dcases*} x & for $r \leq R$ \\ 0 & for $r \geq 10R$ \end{dcases*} ; \\ 
& \| \nabla^j \phi_R \|_{L^\infty} \lesssim R^{2-j} \quad \mbox{for $0 \leq j \leq 4$} \, ; \\
& \mathrm{supp} \, ( \nabla^j \phi_R ) \subset \begin{dcases*} \left \{ |x| \leq 10 R \right \} & for $j=1,2$ \\ \left \{ R \leq |x| \leq 10 R \right \} & for $3 \leq j \leq 4$ \end{dcases*} .
\end{aligned}
\right .
\ee


For the time evolution of the localized virial $\MM_{\phi_R}[u(t)]$ with $\phi_R$ as above, we have the following estimate.

\begin{lemma}[Localized Radial Virial Estimate] \label{lem:MR_radial}
Let $N \geq 2$, $s \in (1/2,1)$, and assume in addition that $u(t,x)$ is a radial solution of \eqref{eq:fNLS}. We then have
\begin{align*}
\frac{d}{dt} \MM_{\phi_R}[u(t)] & \leq 4 \sigma N E[u_0] -  2 (  \sigma N - 2s) \| (-\DD)^{s/2} u(t) \|_{L^2}^2 \\
& \quad + C  \cdot \left ( R^{-2s} + C R^{ - \sigma ( N-1) + \eps s} \| (-\DD)^{s/2} u(t) \|_{L^2}^{(\sigma/s) + \eps} \right ),
\end{align*}
for any $0 < \eps < (2s-1)\sigma/s$. Here $C=C(\| u_0 \|_{L^2}, N, \eps, s, \sigma) >0$ is some constant that only depends on $\|u_0 \|_{L^2}, N, \eps, s$ and $\sigma$.
\end{lemma}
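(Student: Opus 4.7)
The plan is to apply Lemma~\ref{lem:virial_gen} with the radial cutoff $\phi = \phi_R$ and then extract the formal virial identity as the main term, treating the remaining pieces as errors. Writing $\pt_{kl}^2\phi_R = \delta_{kl} + (\pt_{kl}^2\phi_R - \delta_{kl})$ and $\Delta \phi_R = N + (\Delta\phi_R - N)$, and using the identity
$$
\int_0^\infty m^s \int_{\R^N} |\nabla u_m|^2 \, dx\, dm = s\, \| (-\DD)^{s/2} u \|_{L^2}^2
$$
together with conservation of $E[u]$ to eliminate the $L^{2\sigma+2}$ norm, the ``formal'' part of $\frac{d}{dt}\MM_{\phi_R}[u(t)]$ will reproduce
$$
4s\, \|(-\DD)^{s/2}u(t)\|_{L^2}^2 - \tfrac{2\sigma N}{\sigma+1}\int_{\R^N}|u(t)|^{2\sigma+2}\,dx = 4\sigma N E[u_0] - 2(\sigma N-2s)\|(-\DD)^{s/2}u(t)\|_{L^2}^2.
$$
What remains is to control three error terms: a kinetic error involving $\pt_{kl}^2\phi_R - \delta_{kl}$, a bilaplacian error with $\Delta^2 \phi_R$, and a nonlinear error with $\Delta\phi_R - N$.

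\paragraph{Handling the three errors.} For the kinetic error, the radial structure is crucial. For radial $\phi_R(r)$ the Hessian is $\pt_{kl}^2\phi_R = \phi_R''(r)\tfrac{x_k x_l}{r^2} + \tfrac{\phi_R'(r)}{r}\bigl(\delta_{kl}-\tfrac{x_k x_l}{r^2}\bigr)$ with eigenvalues $\phi_R''(r)$ and $\phi_R'(r)/r$; by \eqref{ineq:phi1} both are $\leq 1$, so $\pt_{kl}^2\phi_R \leq \delta_{kl}$ as symmetric matrices. Since $\pt_{kl}^2\phi_R$ is symmetric in $k,l$, contracting against $\overline{\pt_k u_m}\pt_l u_m$ picks out only the real part $\pt_k(\mathrm{Re}\,u_m)\pt_l(\mathrm{Re}\,u_m) + \pt_k(\mathrm{Im}\,u_m)\pt_l(\mathrm{Im}\,u_m)$, which is nonnegative as a quadratic form. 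Thus
$$
\int_0^\infty m^s \int_{\R^N} 4 \,\overline{\pt_k u_m}\,(\pt_{kl}^2\phi_R)\,\pt_l u_m\, dx\, dm \leq 4s\,\|(-\DD)^{s/2}u\|_{L^2}^2,
$$
so this error has the right sign to be dropped. For the bilaplacian error, applying the bound $|K[f,f]| \lesssim \|\DD^2\phi\|_{L^\infty}^s \|\DD\phi\|_{L^\infty}^{1-s}\|f\|_{L^2}^2$ from (the proof of) Lemma~\ref{lem:virial_gen} (see Lemma~\ref{lem:u_m_inter}) together with $\|\DD^2\phi_R\|_{L^\infty}\lesssim R^{-2}$, $\|\DD\phi_R\|_{L^\infty}\lesssim 1$, and mass conservation gives
$$
\left| \int_0^\infty m^s \int_{\R^N} (\DD^2\phi_R)|u_m|^2\, dx\, dm \right| \lesssim R^{-2s}\|u_0\|_{L^2}^2,
$$
which produces the $CR^{-2s}$ term.

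\paragraph{Nonlinear error and conclusion.} For the nonlinear piece, $\Delta\phi_R - N$ is a bounded function supported in $\{|x|\geq R\}$, so the error is controlled by $\int_{|x|\geq R}|u(t)|^{2\sigma+2}\,dx$. Here I will invoke a radial Strauss-type inequality for fractional Sobolev spaces, valid for $N\geq 2$ and $s > 1/2$ (this is where the hypothesis $s>1/2$ is used, in order to beat the $|x|^{-(N-1)/2}$ exterior decay): for radial $u\in H^s(\R^N)$ and any small $\eps>0$,
$$
\|u\|_{L^\infty(|x|\geq R)} \lesssim R^{-(N-1)/2 + \eps s/(2\sigma)}\,\|u\|_{L^2}^{1 - 1/(2s) - \eps/(2\sigma)}\,\|(-\DD)^{s/2}u\|_{L^2}^{1/(2s) + \eps/(2\sigma)},
$$
obtained from the scaling-critical Strauss estimate by a small perturbation off the endpoint $1/(2s)$ (the freedom $0<\eps<(2s-1)\sigma/s$ comes precisely from requiring the exponent of $\|(-\DD)^{s/2}u\|$ to stay in the admissible interval below the $L^2$-$\dot H^s$ Sobolev endpoint). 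Bounding $\int_{|x|\geq R}|u|^{2\sigma+2}\leq \|u\|_{L^\infty(|x|\geq R)}^{2\sigma}\|u\|_{L^2}^2$ and invoking mass conservation yields
$$
\int_{|x|\geq R}|u(t)|^{2\sigma+2}\,dx \lesssim R^{-\sigma(N-1)+\eps s}\,\|(-\DD)^{s/2}u(t)\|_{L^2}^{(\sigma/s)+\eps},
$$
with constant depending on $\|u_0\|_{L^2}, N, s, \sigma, \eps$. Combining the three error bounds with the main term produces the stated inequality. The most delicate step will be the radial Strauss-type bound with the $\eps$-slack: one needs the radial decay $|x|^{(N-1)/2}|u(x)|\lesssim \|u\|_{L^2}^{1-1/(2s)}\|(-\DD)^{s/2}u\|_{L^2}^{1/(2s)}$ for $s>1/2$, $N\geq 2$, and then a short interpolation argument (using $\|(-\DD)^{s_1/2}u\|_{L^2}\leq \|u\|_{L^2}^{1-s_1/s}\|(-\DD)^{s/2}u\|_{L^2}^{s_1/s}$ for $s_1\in(1/2,s)$) to trade a small power of $\|(-\DD)^{s/2}u\|_{L^2}$ for a small power of $R$ and make the constraint $\eps<(2s-1)\sigma/s$ transparent.
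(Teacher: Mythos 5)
Your proposal follows the paper's proof essentially step for step: decompose via Lemma~\ref{lem:virial_gen}, drop the kinetic error using \eqref{ineq:phi1}, bound the bilaplacian term via Lemma~\ref{lem:u_m_inter} to get $\cO(R^{-2s})$, and control the tail of the nonlinearity by the Cho--Ozawa radial inequality \eqref{ineq:Cho} at $\alpha=\tfrac12+\tfrac{\eps s}{2\sigma}$ followed by the interpolation $\|(-\DD)^{\alpha/2}u\|_{L^2}\leq\|u\|_{L^2}^{1-\alpha/s}\|(-\DD)^{s/2}u\|_{L^2}^{\alpha/s}$, which is exactly what produces the displayed exponents and the constraint $0<\eps<(2s-1)\sigma/s$. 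Your treatment of the kinetic error via the pointwise matrix inequality $\pt_{kl}^2\phi_R\leq\delta_{kl}$ is a slight (and in fact cleaner) variant of the paper's reduction $\overline{\pt_k u_m}\,(\pt_{kl}^2\phi_R)\,\pt_l u_m=\phi_R''\,|\nabla u_m|^2$ for radial $u_m$; both rest on \eqref{ineq:phi1} and yield the same bound $4s\|(-\DD)^{s/2}u\|_{L^2}^2$.

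One wording slip in your closing paragraph should be corrected: the ``radial decay'' $|x|^{(N-1)/2}|u(x)|\lesssim\|u\|_{L^2}^{1-1/(2s)}\|(-\DD)^{s/2}u\|_{L^2}^{1/(2s)}$ you invoke is the $\alpha=\tfrac12$ endpoint of \eqref{ineq:Cho}, where the Cho--Ozawa constant $C(N,\alpha)$ diverges, so this inequality is not available as a starting point. The valid order of operations (and what your displayed exterior estimate actually encodes) is to \emph{first} pick $\alpha\in(\tfrac12,s)$ strictly off the endpoint, apply \eqref{ineq:Cho} at that $\alpha$, and \emph{then} interpolate $\dot H^\alpha$ between $L^2$ and $\dot H^s$; the $\eps$-slack is built in from the outset rather than recovered afterward. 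With that adjustment the argument is complete and coincides with the paper's.
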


\begin{remark*}
{\em Note that we assume the strict inequality $s > 1/2$ here. In the limiting case $s=1/2$, the radial Sobolev inequality \eqref{ineq:Cho} below fails to hold, which is however needed in the proof to control the error induced by the nonlinearity. 
}
\end{remark*}

\begin{proof}
As usual, we shall often omit the time variable $t$ in the argument of $u(t,x)$ in the following due to notational convenience.
First, we recall the Hessian of a radial function $f : \R^N \to \C$ can be written as
$$
\pt^2_{kl} f = \left ( \delta_{kl} - \frac{x_l x_k}{r^2} \right ) \frac{\pt_r f}{r} + \frac{x_k x_l}{r^2} \pt_r^2 f.
$$
Thus, we can rewrite the first term on the right-hand side in Lemma \ref{lem:virial_gen} as follows.
$$
4  \int_0^\infty m^s \int_{\R^N}   \overline{\pt_k u_m} (\pt_{kl}^2 \phi_R) \pt_l u_m \, dx \, dm  = 4 \int_0^\infty m^s \int_{\R^N} ( \pt_r^2 \phi_R) |\nabla u_m|^2 \, dx \, dm.
$$
Recalling \eqref{eq:Plancherel} and inequality \eqref{ineq:phi1}, we deduce that
$$
\begin{aligned}
& 4  \int_0^\infty m^s \int_{\R^N} \overline{\pt_k u_m}   (\pt_{kl}^2 \phi_R) \pt_l u_m \, dx \, dm & \\
& =  4s \| (-\DD)^{s/2} u(t) \|_{L^2}^2 - 4 \int_0^\infty m^s \int_{\R^N} \left ( 1 - \pt_r^2 \phi_R \right ) |\nabla u_m|^2 \, dx \, dm \\
& \leq 4s \| (-\DD)^{s/2} u(t) \|_{L^2}^2 .
\end{aligned}
$$
Moreover, from Lemma \ref{lem:u_m_inter} we have the bound
$$
\left | \int_0^\infty m^s \int_{\R^N} ( \DD^2 \phi_R ) |u_m|^2 \, dx \, dm \right | \lesssim \| \DD^2 \phi_R \|_{L^\infty}^s \| \DD \phi_R \|_{L^\infty}^{1-s} \| u \|_{L^2}^2 \lesssim R^{-2s} ,
$$
where we also used the properties of $\phi_R$ and the conservation of $L^2$-mass of $u(t)$. 

For the last term on right-hand side in Lemma \ref{lem:virial_gen}, we recall that $\DD \phi_R(r) - N \equiv 0$ on $ \{ r \leq R \}$ and we thus obtain that
\begin{align*}
- \frac{2\sigma}{\sigma+1} \int_{\R^N} (\DD \phi_R) |u|^{2 \sigma+2} \, dx  & = - \frac{  2\sigma N}{\sigma+1} \int_{\R^N} |u|^{2 \sigma+2} \, dx \\
& \quad  - \frac{2 \sigma}{\sigma+1} \int_{|x| \geq R} ( \DD \phi_R - N) |u|^{2 \sigma+2} \, dx.
\end{align*}
Next, we recall from \cite{ChOz-09} the fractional radial Sobolev (generalized Strauss) inequality
\be \label{ineq:Cho}
\sup_{x \neq 0} |x|^{\frac{N}{2}-\alpha} |u(x)| \leq C(N,\alpha) \| (-\DD)^{\alpha/2} u \|_{L^2} \quad \
\ee
for all radial functions $u \in \dot{H}^{\alpha}(\R^N)$ provided that $1/2 < \alpha < N/2$. Now, let $0 < \eps < (2s-1)\sigma/ s$ and set $\alpha = \frac{1}{2} + \eps \frac{s}{2 \sigma}$, which implies that $1/2 < \alpha < s < N/2$. From the interpolation inequality $\| (-\DD)^{\alpha/2} u \|_{L^2} \leq \| u \|_{L^2}^{1-\alpha/s} \| (-\DD)^{s/2} u \|_{L^2}^{\alpha/s} \lesssim \| (-\DD)^{s/2} u \|_{L^2}^{\alpha/s}$ and estimate \eqref{ineq:Cho}, we deduce
\begin{align*}
\int_{|x| \geq R} |u|^{2 \sigma+2} \, dx & \leq \| u \|_{L^2}^2 \| u \|_{L^\infty(|x| \geq R)}^{2 \sigma} \lesssim C(N,\alpha, \eps) R^{-2\sigma(\frac{N}{2}-\alpha) } \| (-\DD)^{\alpha/2} u \|_{L^2}^{2 \sigma} \\
& \lesssim C(N, \alpha, \eps) R^{-2 \sigma( \frac{N}{2} - \alpha)} \| (-\DD)^{s/2} u \|_{L^2}^{2 \sigma \alpha/s} \\
& = C(N, \alpha,\eps) R^{-\sigma (N-1)+ \eps s } \| (-\DD)^{s/2} u \|_{L^2}^{(\sigma/s) + \eps}.
\end{align*}

In summary, we have shown that
\begin{align*}
\frac{d}{dt} \MM_{\phi_R}[u(t)] & \leq 4s \| (-\DD)^{s/2} u(t) \|_{L^2}^2 - \frac{2 \sigma N}{\sigma+1} \int_{\R^N} |u(t,x)|^{2 \sigma +2} \, dx \\
& \quad + C \cdot \left ( R^{-2s} + C R^{-\sigma(N-1) + \eps s} \| (-\DD)^{s/2} u(t) \|_{L^2}^{(\sigma/s)+\eps} \right ) \\
& =  4 \sigma N E[u_0] -  2 (  \sigma N - 2s) \| (-\DD)^{s/2} u(t) \|_{L^2}^2 \\
& \quad + C \cdot \left ( R^{-2s} + C R^{ - \sigma ( N-1) + \eps s} \| (-\DD)^{s/2} u(t) \|_{L^2}^{(\sigma/s) + \eps} \right ),
\end{align*}
for any $0 < \eps < (2s-1)\sigma/s$ with some constant $C=C(\| u_0 \|_{L^2}, N,\eps, s,  \sigma) >0$. Note that  we used the conservation of energy $E[u(t)]$  in the last step. The proof of Lemma \ref{lem:MR_radial} is now complete.
\end{proof}

For the proof of Theorem \ref{thm:L2super} (ii) below (which deals with the $L^2$-critical case), we shall need the following refined version of Lemma \ref{lem:MR_radial} involving the nonnegative radial functions
\be \label{def:psi12}
\psi_{1,R}(r) := 1 - \pt_r^2 \phi_R(r) \geq 0 \quad \mbox{and} \quad \psi_{2,R}(r) := N - \DD \phi_R(r) \geq 0.
\ee

\begin{lemma}[A Refined Version of Lemma \ref{lem:MR_radial}] \label{lem:MR_radial_fine} Under the hypotheses of Lemma \ref{lem:MR_radial} and $\sigma=2s/N$, we  have that
\begin{align*}
\frac{d}{dt} \MM_{\phi_R}[u(t)] & \leq 8 s E[u_0] - 4 \int_0^{\infty} m^s \int_{\R^N} \left \{ \psi_{1,R} - c(\eta) \psi_{2,R}^{\frac{N}{2s}} \right \} |\nabla u_m|^2 \,dx \, dm \\
& \quad + \cO \left ( (1 + \eta^{-\beta}) R^{-2s} + \eta( 1 + R^{-2} + R^{-4}) \right ),
\end{align*}
for every $\eta > 0$ and $R >0$,  where $c(\eta) = \eta/(N + 2 s)$ and $\beta = 2s/(N - 2s)$.
\end{lemma}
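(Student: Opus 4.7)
The plan is to specialize Lemma \ref{lem:virial_gen} to $\phi=\phi_R$, exploit radial symmetry of $u$ (hence of $u_m$, since $(-\DD+m)^{-1}$ preserves radial functions), and then perform the critical-case algebra. Since $\nabla u_m=(\pt_r u_m)\hat r$ for radial $u_m$, the Hessian contraction collapses to $\overline{\pt_k u_m}(\pt^2_{kl}\phi_R)\pt_l u_m=(\pt_r^2\phi_R)|\nabla u_m|^2$. Writing $\pt_r^2\phi_R=1-\psi_{1,R}$ and invoking the Plancherel identity \eqref{eq:Plancherel}, the Hessian term becomes
\begin{equation*}
4s\|(-\DD)^{s/2}u\|_{L^2}^2-4\int_0^{\infty} m^s\int \psi_{1,R}|\nabla u_m|^2\,dx\,dm.
\end{equation*}
The biLaplacian contribution is controlled by $\cO(R^{-2s})$ via Lemma \ref{lem:u_m_inter}, using $\|\DD^2\phi_R\|_{L^\infty}\lesssim R^{-2}$, $\|\DD\phi_R\|_{L^\infty}\lesssim 1$, and mass conservation; this is where part of the $R^{-2s}$ error originates.

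Next I decompose $\DD\phi_R=N-\psi_{2,R}$ in the nonlinear term. Using the energy identity $\frac{1}{\sigma+1}\int|u|^{2\sigma+2}\,dx=\|(-\DD)^{s/2}u\|_{L^2}^2-2E[u_0]$ together with the critical relation $\sigma N=2s$, the $N$-piece rewrites as
\begin{equation*}
-\frac{2\sigma N}{\sigma+1}\int|u|^{2\sigma+2}\,dx=-4s\|(-\DD)^{s/2}u\|_{L^2}^2+8sE[u_0],
\end{equation*}
whose dispersive part exactly cancels the $+4s\|(-\DD)^{s/2}u\|_{L^2}^2$ produced earlier. What remains is
\begin{equation*}
\frac{d}{dt}\MM_{\phi_R}[u(t)]=8sE[u_0]-4\int_0^{\infty} m^s\int \psi_{1,R}|\nabla u_m|^2\,dx\,dm+\mathcal{E}_R[u(t)]+\cO(R^{-2s}),
\end{equation*}
with residual nonlinear error $\mathcal{E}_R[u]:=\frac{2\sigma}{\sigma+1}\int\psi_{2,R}|u|^{2\sigma+2}\,dx$, supported where $|x|\geq R$.

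The core step is then the weighted bound
\begin{equation*}
\mathcal{E}_R[u]\leq 4c(\eta)\int_0^\infty m^s\int\psi_{2,R}^{N/(2s)}|\nabla u_m|^2\,dx\,dm+\cO\!\left(\eta^{-\beta}R^{-2s}+\eta(1+R^{-2}+R^{-4})\right).
\end{equation*}
To establish it, I would apply the $L^2$-critical Gagliardo--Nirenberg inequality to $v:=\psi_{2,R}^{1/(2\sigma+2)}u$, obtaining $\int|v|^{2\sigma+2}\lesssim\|v\|_{L^2}^{2\sigma}\|(-\DD)^{s/2}v\|_{L^2}^2\lesssim\|(-\DD)^{s/2}v\|_{L^2}^2$ by mass conservation, and then recast $\|(-\DD)^{s/2}v\|_{L^2}^2$ via Balakrishnan as $(1/s)\int_0^\infty m^s\int|\nabla v_m|^2\,dx\,dm$. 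Using $(-\DD+m)(\psi_{2,R}^\alpha u_m)=c_s\psi_{2,R}^\alpha u-[2\nabla\psi_{2,R}^\alpha\cdot\nabla u_m+(\DD\psi_{2,R}^\alpha)u_m]$ with $\alpha=1/(2\sigma+2)$, I write $v_m=\psi_{2,R}^\alpha u_m$ modulo a commutator correction that is $(-\DD+m)^{-1}$-smoothed and of size controlled by $\|\nabla\psi_{2,R}^\alpha\|_{L^\infty}=\cO(R^{-1})$ and $\|\DD\psi_{2,R}^\alpha\|_{L^\infty}=\cO(R^{-2})$. This matches the leading contribution with $\int\psi_{2,R}^{N/(2s)}|\nabla u_m|^2$ after noting $N/(2s)=1/\sigma$ and absorbing an $L^\infty$-factor of $\psi_{2,R}$. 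A final application of Young's inequality with the conjugate pair $(N/(2s),N/(N-2s))$ separates a small multiple $\eta$ of the weighted dispersive integral from a low-order remainder of order $\eta^{-\beta}R^{-2s}$, exactly yielding $\beta=2s/(N-2s)$.

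The hard part is the weighted Gagliardo--Nirenberg/commutator step: the nonlocal operator $(-\DD)^{s/2}$ does not commute cleanly with multiplication by $\psi_{2,R}^\alpha$, so one must track the commutator $[(-\DD+m)^{-1},\psi_{2,R}^\alpha]$ with uniform-in-$m$ control, keep the sharp exponent $N/(2s)$ on $\psi_{2,R}$, and arrange the Young splitting so that the crossterms from $\|\nabla\psi_{2,R}^\alpha\|_{L^\infty}^j$ ($j=1,2$) collapse into the announced $\eta(1+R^{-2}+R^{-4})$ error while the principal nonlinear term docks into the coefficient $c(\eta)=\eta/(N+2s)$ in front of $\int\psi_{2,R}^{N/(2s)}|\nabla u_m|^2\,dx\,dm$.
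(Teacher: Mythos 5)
Your setup—collapsing the Hessian via radiality, controlling the biLaplacian by Lemma \ref{lem:u_m_inter} with the $R^{-2s}$ error, splitting $\Delta\phi_R = N - \psi_{2,R}$, and cancelling the $4s\|(-\Delta)^{s/2}u\|_{L^2}^2$ terms using the energy identity and $\sigma N = 2s$—is correct and matches the paper. The problem lies in your control of $\mathcal{E}_R[u]=\frac{2\sigma}{\sigma+1}\int\psi_{2,R}|u|^{2\sigma+2}$, and it is a genuine gap rather than a mere variant.

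First, the $L^2$-critical Gagliardo--Nirenberg step $\int|v|^{2\sigma+2}\lesssim\|v\|_{L^2}^{2\sigma}\|(-\Delta)^{s/2}v\|_{L^2}^2$ gives a \emph{fixed} constant; it produces neither the small prefactor $\eta$ nor the decaying factor $R^{-2s}$, so there is nothing left for the Young inequality at the end to split. The paper gets both by using the radial Strauss-type inequality \eqref{ineq:Cho} on $\psi_{2,R}^{N/(4s)}u$, which exploits that $\psi_{2,R}$ is supported in $\{|x|\geq R\}$ to produce a factor $R^{-\frac{2s}{N}(N-2s)}$; Young's inequality with exponents $(N/(2s),N/(N-2s))$ then trades this $R$-decay for the stated $\eta\|(-\Delta)^{s/2}(\psi_{2,R}^{N/(4s)}u)\|_{L^2}^2 + \cO(\eta^{-\beta}R^{-2s})$. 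Without the Strauss step, the decay is simply absent.

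Second, your choice $v=\psi_{2,R}^{1/(2\sigma+2)}u$ leads, after the Balakrishnan/commutator step, to the weight $\psi_{2,R}^{2/(2\sigma+2)}=\psi_{2,R}^{1/(\sigma+1)}=\psi_{2,R}^{N/(N+2s)}$, not $\psi_{2,R}^{N/(2s)}$. Since $N/(N+2s)<N/(2s)$ and $\psi_{2,R}$ vanishes on $\{|x|\leq R\}$, the inequality $\psi_{2,R}^{N/(N+2s)}\lesssim\psi_{2,R}^{N/(2s)}$ is \emph{false} near the inner edge of the support (it goes the wrong way for small $\psi_{2,R}$), so "absorbing an $L^\infty$-factor" cannot repair the exponent. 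Moreover, the exponent $N/(2s)$ is not cosmetic: the cutoff verification in Section~\ref{sec:cutoff} relies on $\psi_1(r)\sim(r-1)^2$ and $\psi_2(r)\sim(r-1)^2$ near $r=1$, so one needs $\psi_2^{\nu}\lesssim(r-1)^2$, i.e. $2\nu\geq 2$, which holds for $\nu=N/(2s)$ (since $N/s\geq 2$) but fails for $\nu=N/(N+2s)<1$. The correct route is therefore: apply \eqref{ineq:Cho} to $\psi_{2,R}^{N/(4s)}u$, use Young to introduce $\eta$ and $\eta^{-\beta}R^{-2s}$, and \emph{then} convert $\|(-\Delta)^{s/2}(\psi_{2,R}^{N/(4s)}u)\|_{L^2}^2$ via Balakrishnan and the low/high-$m$ commutator estimates (which give the $\cO(1+R^{-2}+R^{-4})$ correction, ultimately multiplied by $\eta$) into $\int_0^\infty m^s\int\psi_{2,R}^{N/(2s)}|\nabla u_m|^2$.
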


\begin{proof}
For notational convenience, we write $\psi_1 = \psi_{1,R}$ and $\psi_2 = \psi_{2,R}$ in the following. Inspecting the proof of Lemma \ref{lem:MR_radial}, we immediately get
\be \label{eq:thomas_master}
\begin{aligned}
& \frac{d}{dt} \MM_{\phi_R}[u(t)]  = 8 s E[u_0] - 4 \int_0^\infty m^s \int_{\R^N}  \psi_1 |\nabla u_m|^2 \, dx \, dm \\
& \phantom{ \frac{d}{dt} \MM_{\phi_R}[u(t)] =} + \frac{4 s}{N + 2 s} \int_{\R^N} \psi_2 |u|^{\frac{4s}{N}+2}  \, dx+ \cO(R^{-2s}). 
\end{aligned}
\ee
We divide the rest of the proof into following steps.

\medskip
{\bf Step 1 (Control of Nonlinearity).} Recall that $\mathrm{supp} \, \psi_2 \subset \{ |x| \geq R \}$. We apply the radial Sobolev inequality \eqref{ineq:Cho} to the radial function $\psi_2^{\frac{N}{4s}} u \in H^s(\R^N)$ and use that $\| u \|_{L^2} \lesssim 1$, which together yields
\be \label{ineq:thomas_strauss}
\begin{aligned}
& \int_{\R^N} \psi_2 |u|^{\frac{4s}{N}+2} \, dx = \int_{|x| \geq R} ( \psi_2^{\frac{N}{4s}} |u| )^{\frac{4s}{N}} |u|^2 \,dx \leq \| \psi_2^{\frac{N}{4s}} u \|_{L^\infty(|x| \geq R)}^{\frac{4s}{N}} \| u \|_{L^2}^2 \\
& \phantom{ \int_{\R^N} \psi_2 |u|^{\frac{4s}{N}+2} \, dx} \lesssim R^{-\frac{2s}{N}(N-2s)} \| (-\DD)^{s/2} (\psi_2^{\frac{N}{4s}} u ) \|_{L^2}^{\frac{4s}{N}} \\
& \phantom{\int_{\R^N} \psi_2 |u|^{\frac{4s}{N}+2} \, dx} \leq \eta \| (-\DD)^{s/2} ( \psi_2^{\frac{N}{4s}} u ) \|_{L^2}^2 + \cO (\eta^{- \beta} R^{-2 s}), \quad \beta = \frac{2s}{N - 2s} ,
\end{aligned}
\ee
where in the last step we used Young's inequality $a b \lesssim \eta a^q + \eta^{-p/q} b^p$ with $1/p+1/q=1$ such that $q=N/2s$, $\beta = p/q$, and $\eta > 0$ is an arbitrary number. For notational convenience,  let us define $\chi := \psi_2^{\frac{N}{4s}}$. From the identity \eqref{eq:Plancherel} we recall that
\be \label{eq:thomas}
s \| (-\DD)^{s/2} ( \chi u) \|_{L^2}^2 = \int_0^\infty m^s \int_{\R^N} |\nabla ( \chi u )_m |^2 \, dx \, dm ,
\ee
where we denote
$$
(\chi u)_m =c_s\frac{1}{-\DD + m} ( \chi u )
$$
for $m >0$ and $c_s$ as in \eqref{def:cs} above. To estimate the right-hand side of \eqref{eq:thomas}, we split the $m$-integral in the regions $\{ 0 < m \leq 1 \}$ (low frequencies) and $\{ m \geq 1\}$ (high frequencies).  

To estimate the contribution in the low-frequency region, we notice that
\be \label{ineq:mlow}
 \left | \int_0^1 m^s \int_{\R^N} \left | \frac{\nabla}{-\DD+m} (\chi u) \right  |^2 \, dx \, dm \right |   \leq \int_0^1 m^{s-1} \| \chi u \|_{L^2}^2 \, dm  \lesssim 1,
\ee
where we make use of the bounds $\| \frac{\nabla}{-\DD + m} \|_{L^2 \to L^2} \leq m^{-1/2}$ and $\| \chi \|_{L^\infty} \lesssim1 $. To control the right-hand side of \eqref{eq:thomas} in the high frequency region $\{ m \geq 1\}$, we need a more elaborate argument  worked out in the next step.

\medskip
{\bf Step 2 (Control of High Frequencies $m \geq 1$).} By using the commutator identity
$\left [ \frac{1}{-\DD+m}, \chi \right ] = \frac{1}{-\DD +m} [\DD, \chi] \frac{1}{-\DD+m}$,
we conclude
\begin{align*}
\nabla ( \chi u )_m & = \nabla (\chi u_m) + c_s \nabla \left [ \frac{1}{-\DD+m}, \chi \right ] u = \chi \nabla u_m + \nabla \chi u_m + \frac{\nabla}{-\DD+m}[\DD, \chi] u_m.
\end{align*}
with $c_s = \sqrt{\sin (\pi s)/\pi}$ defined  in \eqref{def:cs}. Thus we get
\begin{align*}
& \left | \int_1^\infty m^s \int_{\R^N} \left | \frac{\nabla}{-\DD+m} [\DD, \chi] u_m \right |^2 \,dx \, dm \right |  \lesssim \int_1^\infty m^{s-1} \left ( \| \nabla \chi \cdot \nabla u_m \|_{L^2}^2 + \| \Delta \chi u_m \|_{L^2}^2 \right ) \, dm \\
& \lesssim \int_1^\infty m^{s-1} \left \{ \| \nabla \chi \|_{L^\infty}^2 \left \| \frac{\nabla}{-\DD+m} u \right \|_{L^2}^2 + \| \DD \chi \|_{L^\infty}^2 \left \| \frac{1}{-\DD+m} u \right \|_{L^2}^2\right \} \, dm \\
& \lesssim  \int_1^\infty \left ( m^{s-2} \| \nabla \chi \|_{L^\infty}^2  + m^{s-3} \| \DD \chi \|_{L^\infty}^2   \right ) \, dm \lesssim \frac{\| \nabla \chi \|_{L^\infty}^2}{1-s} + \frac{\| \DD \chi \|_{L^\infty}^2}{2-s},
\end{align*}
where we used that $[\DD, \chi] = 4 (\nabla \chi) \cdot \nabla + \DD \chi$ as well as the estimates $\| \frac{\nabla}{-\DD +m} \|_{L^2 \to L^2} \leq m^{-1/2}$ and $\| \frac{1}{-\DD +m} \|_{L^2 \to L^2} \leq m^{-1}$ and conservation of mass in the last line. Similarly, we get
$$
\left | \int_1^\infty m^s \int_{\R^N} | \nabla \chi u_m |^2 \, dx \, dm \right | \lesssim \frac{\| \nabla \chi \|_{L^\infty}^2}{1-s}.
$$
Recalling that $\chi = \psi_2^{\frac{N}{4s}}$ with $\psi_2 = N - \DD \phi_R$, the properties \eqref{eq:phi2} are seen to imply that $\| \nabla \chi \|_{L^\infty} \lesssim R^{-1}$ and $\| \DD \chi \|_{L^\infty} \lesssim R^{-2}$. Thus we can summarize the estimates found above and \eqref{ineq:mlow} to conclude that
\be \label{eq:thomas2}
s \| (-\DD)^{s/2} (\chi u ) \|_{L^2}^2 = \int_0^\infty m^s \int_{\R^N} \chi^2 |\nabla u_m|^2 \, dx \, dm + \cO ( 1+R^{-2} + R^{-4} ).
\ee

\medskip
{\bf Step 3 (Conclusion).} If we now combine \eqref{eq:thomas2} with \eqref{ineq:thomas_strauss}, we obtain
\begin{align*}
\int_{\R^N} \psi_2 |u|^{\frac{4s}{N}+2} \, dx  & =  \frac{\eta}{s} \int_0^\infty m^s \int_{\R^N}  \chi^2 |\nabla u_m|^2 \, dx \, dm  \\
& \quad + \cO \left ( \eta^{-\beta} R^{-2s} + \eta (1 + R^{-2} + R^{-4}) \right ).
\end{align*}
By inserting this back into \eqref{eq:thomas_master} and setting $c(\eta)=\eta/(N+2s)$, we complete the proof of Lemma \ref{lem:MR_radial_fine}.
\end{proof}

\section{Radial Blowup in $\R^N$: Proof of Theorem \ref{thm:L2super}}

In this section, we prove Theorem \ref{thm:L2super}. We discuss the cases (i) and (ii) as follows.

\subsection{Proof of Theorem \ref{thm:L2super}, Case (i)}
Let $N \geq 2$ and $s \in (1/2,1)$. We consider the $L^2$-supercritical case when $0 < s_c \leq s$ and we impose the extra (technical) condition that $\sigma < 2s$ holds (see below for details on this condition). Furthermore, we suppose that
$$
u \in C\big ( [0,T); H^{2s}(\R^N) \big)
$$
is a radial solution of \eqref{eq:fNLS}. Let $\phi_R(r)$ with $R> 0$ be a radial cutoff function on $\R^N$ as introduced in Subsection \ref{subsec:MR} above. For notational convenience, we shall write
$$
\MM_R[u(t)] := \MM_{\phi_R}[u(t)]
$$
for the localized virial of $u(t)$. We organize the rest of the proof as follows.

\medskip
{\bf Case 1: $E[u_0] < 0$.}  Let us define $\delta :=   \sigma N -2s > 0$. From Lemma \ref{lem:MR_radial} with $\eps > 0$ sufficiently small and fixed, we deduce the inequality (with $o_R(1) \to 0$ as $R \to +\infty$ uniformly in $t$):
\be \label{ineq:MR_vir1}
\begin{aligned}
& \frac{d}{dt} \MM_{R}[u(t)]  \leq 4  \sigma N E[u_0] - 2 \delta \| (-\DD)^{s/2} u(t) \|_{L^2}^2 + o_R(1) \cdot \left (1 +  \|(-\DD)^{s/2} u(t) \|_{L^2}^{(\sigma/s)+\eps} \right )  \\
& \phantom{\frac{d}{dt} \MM_{R}[u(t)]}  \leq 2 \sigma N E[u_0] - \delta \| (-\DD)^{s/2} u(t) \|_{L^2}^2 \quad \mbox{for all $t \in [0,T)$},
\end{aligned}
\ee
provided that $R \gg 1$ is taken sufficiently large. In the last step, we used that $E[u_0] < 0$, Young's inequality, and that $\sigma/s + \eps < 2$ when $\eps > 0$  is sufficiently small. [At this point, the condition $\sigma < 2s$ is needed.]

With estimate \eqref{ineq:MR_vir1} at hand, we can now adapt the strategy of Ogawa-Tstutsumi \cite{OgTs-91} to the setting of fractional NLS with focusing $L^2$-supercritical nonlinearity. Suppose $u(t)$ exists for all times $t \geq 0$, i.\,e., we can take $T=+\infty$. From \eqref{ineq:MR_vir1} and $E[u_0] < 0$ it follows that $\frac{d}{dt} \MM_R[u(t)] \leq -c$ with some constant $c> 0$. By integrating this bound, we conclude that $\MM_R[u(t)] < 0$ for all $t \geq t_1$ with some time sufficiently large time $t_1 \gg 1$. Thus, if we integrate \eqref{ineq:MR_vir1} on $[t_1, t]$, we obtain
\be \label{ineq:MR_good2}
\MM_R[u(t)] \leq -\delta \int_{t_1}^t \| (-\DD)^{s/2} u(\tau) \|_{L^2}^2 \, d\tau \leq 0 \quad \mbox{for all $t \geq t_1$}.
\ee
On the other hand, we use Lemma \ref{lem:MR_estimate} and $L^2$-mass conservation to find that
\be \label{ineq:MR_good}
\begin{aligned}
& |\MM_R[u(t)]|  \lesssim C(\phi_R)  \left ( \| |\nabla|^{1/2} u(t) \|_{L^2}^2 +  \| |\nabla|^{1/2} u(t) \|_{L^2} \right ) \\
& \phantom{ |\MM_R[u(t)]|} \lesssim C(\phi_R) \left (  \| (-\DD)^{s/2} u(t) \|_{L^2}^{1/s} + \| (-\DD)^{s/2} u(t) \|_{L^2}^{1/2s} \right ),
\end{aligned}
\ee
where we also used the interpolation estimate $\| |\nabla|^{1/2} u \|_{L^2} \leq \| (-\DD)^{s/2} u \|_{L^2}^{1/2s} \| u \|_{L^2}^{1-1/2s}$ for $s > 1/2$. Next, we claim the lower bound
\be \label{ineq:Hslower}
\| (-\DD)^{s/2} u(t) \|_{L^2} \gtrsim 1 \quad \mbox{for all $t \geq 0$}.
\ee
Indeed, suppose this bound was not true. Thus we have that $\| (-\DD)^{s/2} u(t_k) \|_{L^2} \to 0$ for some sequence of times $t_k \in [0,\infty)$. However, by $L^2$-mass conservation and the Gagliardo-Nirenberg inequality, this implies that $\| u(t_k) \|_{L^{2 \sigma+2}} \to 0$ as well. Hence we get $E[u(t_k)] \to 0$, which is a contradiction to  $E[u(t)] = E[u_0] < 0$. Thus we deduce that \eqref{ineq:Hslower} holds.

If we now combine the lower bound \eqref{ineq:Hslower} with \eqref{ineq:MR_good}, we  find 
\be
|\MM_R[u(t)]|  \lesssim C(\phi_R) \| (-\DD)^{s/2} u(t) \|_{L^2}^{1/s} .
\ee
Thus we conclude from \eqref{ineq:MR_good2} that
\be
\MM_{R}[u(t)] \lesssim -C(\phi_R) \int_{t_1}^t |\MM_{R}[u(\tau)]|^{2s} \, d \tau \quad \mbox{for all $t \geq t_1$}.
\ee
By using this nonlinear integral inequality, a straightforward argument yields the bound $\MM_{R}[u(t)] \lesssim -C(\phi_R) |t - t_*|^{1-2s}$ for $s > 1/2$ with some finite $t_* < + \infty$. Therefore we have $\MM_{R}[u(t)] \to -\infty$ as $t \uparrow t_*$. Hence the solution $u(t)$ cannot exist for all times $t \geq 0$ and consequently we must have that $T < +\infty$ holds. 

{\bf Case 2: $E[u_0] \geq 0$.} Suppose that $E[u_0] \geq 0$ and that we have
\be \label{ineq:payne}
\left \{ \begin{array}{ccc} 
E[u_0]^{s_c} M[u_0]^{s-s_c} & <  &E[Q]^{s_c} M[Q]^{s-s_c}  , \\[1ex]
\| (-\DD)^{s/2} u_0 \|_{L^2}^{s_c} \| u_0 \|_{L^2}^{s-s_c} & > & \| (-\DD)^{s/2} Q \|_{L^2}^{s_c} \| Q \|_{L^2}^{s-s_c} .
\end{array}
\right .
\ee
Recall our convention that for the energy-critical case $s_c=s$, we set $M[Q]^{s-s_c} = (M[Q])^0 = 1$ although, the ground state $Q$ may fail to be in $L^2(\R^N)$ for $s =s_c$; see Section \ref{sec:cutoff} below.

From the conservation of energy and $L^2$-mass combined with Gagliardo-Nirenberg inequality \eqref{ineq:GN} (when $s_c < s$) or Sobolev's inequality \eqref{ineq:Sob} (when $s_c=s$) we get
\be \label{ineq:E0payne}
E[u_0] = \frac{1}{2} \| (-\DD)^{s/2} u(t) \|_{L^2}^2 - \frac{1}{2 \sigma +2} \| u(t) \|_{L^{2 \sigma+2}}^{2 \sigma+2} \geq F ( \| (-\DD)^{s/2} u(t) \|_{L^2} ),
\ee 
where the function $F : [0, \infty) \to \R$ is defined as
\be
F(y) := \frac{y^2}{2} - \frac{C_{N, \sigma, s}}{2 \sigma +2} ( M[u_0] )^{\frac{\sigma}{s}(s-s_c)} y^{2 + 2\sigma \frac{s_c}{s}} , \ \ \text{with} \ \  {2 + 2\sigma \frac{s_c}{s} = \frac{\sigma N}{s}},
\ee
where $C_{N,\sigma,s} > 0$ denotes the optimal constant for the Gagliardo-Nirenberg inequality \eqref{ineq:GN} if $s_c < s$ or Sobolev's inequality \eqref{ineq:Sob} if $s_c=s$. We readily verify that $F(y)$ has a unique global maximum
\be
F(y_{\max}) = \frac{s_c}{N} y^2_{\mathrm{max}},
\ee
which is attained at
\be
y_{\max} = (K_{N, \sigma,s})^{\frac{1}{s_c}} M[u_0]^{- \frac{s - s_c}{2 s_c}} \quad \mbox{with} \quad K_{N, \sigma,s} = \left ( \frac{2s(\sigma+1)}{ \sigma  NC_{N,\sigma,s}} \right)^{\frac{s}{2\sigma}}  .
\ee
Next, by Proposition \ref{prop:Q}, we have
$$
K_{N,\sigma,s} = \| (-\DD)^{s/2} Q \|_{L^2}^{s_c} \| Q \|_{L^2}^{s-s_c} = \left ( \frac{s_c}{N} \right )^{-\frac{s_c}{2}} E[Q]^{\frac{s_c}{2}} M[Q]^{\frac{s-s_c}{2}} .
$$
Thus condition \eqref{ineq:payne} tells us that
$$
E[u_0] < F(y_{\mathrm{max}}) \quad \mbox{and} \quad \| (-\DD)^{s/2} u_0 \|_{L^2} > y_{\mathrm{max}} .
$$
By continuity in time, we deduce that
\be \label{ineq:paynelow}
\| (-\DD)^{s/2} u(t) \|_{L^2} > y_{\mathrm{max}} \quad \mbox{for all $t \in [0,T)$}.
\ee
Indeed, suppose this bound was not true. Then, by continuity, there is some time $t_* \in (0,T)$ such that $\| (-\DD)^{s/2} u(t_*) \|_{L^2} = y_{\max}$. But this contradicts \eqref{ineq:E0payne}, since $E[u_0] < F(y_{\max})$. Therefore the lower bound \eqref{ineq:paynelow} holds.

Next, we pick $\eta > 0$ sufficiently small to ensure that
$$
E[u_0]^{s_c} M[u_0]^{s-s_c} \leq (1- \eta)^{s_c} E[Q]^{s_c} M[Q]^{s-s_c} .
$$
From estimate \eqref{ineq:paynelow} we obtain by an elementary calculation that
$$
2 \delta (1-\eta) \| (-\DD)^{s/2} u(t) \|_{L^2}^2 \geq 4 \sigma N E[u_0] \quad \mbox{for all $t \in [0,T)$},
$$
where we recall that $\delta = \sigma N - 2 s > 0$. By inserting this bound into the differential inequality from Lemma \ref{lem:MR_radial}, we get
\be
\begin{aligned}
\frac{d}{dt} \MM_{R}[u(t)] & \leq 4  \sigma N E[u_0] - 2 \delta \| (-\DD)^{s/2} u(t) \|_{L^2}^2 \\
& \quad + o_R(1) \cdot \left ( 1 + \| (-\DD)^{s/2} u(t) \|_{L^2}^{\sigma/s + \eps} \right ) \\ 
& \leq - \left (\delta \eta + o_R(1) \right ) \| (-\DD)^{s/2} u(t) \|_{L^2}^2 + o_R(1),
\end{aligned}
\ee
with $o_R(1) \to 0$ as $R \to \infty$ uniformly in $t$, where we have chosen $\eps > 0$ small enough such that $\sigma/s + \eps < 2$ (which is possible, since $\sigma < 2s$ by assumption). Choosing $R \gg 1$ sufficiently large and using \eqref{ineq:paynelow} again, we thus conclude
\be \label{ineq:payne3}
\frac{d}{dt} \MM_R[u(t)] \leq - \frac{\delta \eta}{2} \|(-\DD)^{s/2} u(t) \|_{L^2}^2 \quad \mbox{for all $t \in [0,T)$}.
\ee
Suppose now that $T= +\infty$ holds. Since $\| (-\DD)^{s/2} u(t) \|_{L^2} > y_{\max} > 0$ for all $t \geq 0$, we see from \eqref{ineq:payne3} that $\MM_R[u(t)] < 0$ for all $t \geq t_1$ with some sufficiently large time $t_1 \gg 1$. Hence, by integrating on $[t_1, t]$, we obtain
$$
\MM_{R}[u(t)] \leq - \frac{\delta \eta}{2} \int_{t_1}^t \| (-\DD)^{s/2} u(\tau) \|_{L^2}^2 \, d \tau \leq 0 \quad \mbox{for all $t \geq t_1$}.
$$
By following exactly the steps after \eqref{ineq:MR_good2} above, we deduce that $u(t)$ cannot exist for all times $t \geq 0$.

The proof of Theorem \ref{thm:L2super}, Case (i) is now complete.

\subsection{Proof of Theorem \ref{thm:L2super},  Case (ii)}

Let $N \geq 2$, $s \in (1/2,1)$, and we consider the $L^2$-critical exponent $\sigma=2s/N$. We assume that 
$$
u \in C \big ([0,T); H^{2s}(\R^N) \big)
$$
is a radial solution of \eqref{eq:fNLS} with negative energy
$$
E[u_0] < 0.
$$

Let $\phi_R(r)$ be a radial cutoff function as introduced in Subsection \ref{subsec:MR} above. Recall the definitions of the functions $\psi_{1,R}(r)$ and $\psi_{2,R}(r)$ in \eqref{def:psi12}, depending on the function $\phi_R(r)$. Furthermore, as in Lemma \ref{lem:MR_radial_fine}, we set $c(\eta) = \eta/(N+2s)$ for $\eta > 0$. As shown in Section \ref{sec:cutoff} below, we can choose $\phi_R(r)$ and $\eta > 0$ sufficiently small such that 
$$
\psi_{1,R}(r) - c(\eta) ( \psi_{2,R}(r))^{\frac{N}{2s}} \geq 0 \quad \mbox{for all $r > 0$,}
$$   
and for all $R >0$. 

Thus if we choose $\eta \ll 1$ sufficiently small and then $R \gg 1$ sufficiently large, we can apply Lemma \ref{lem:MR_radial_fine} to deduce that
\be \label{ineq:L2crit}
\frac{d}{dt} \MM_R[u(t)] \leq 4 s E[u_0] \quad \mbox{for $t \in [0,T)$},
\ee
where we write $\MM_{\phi_R}[u(t)] = \MM_{R}[u(t)]$ for notational convenience. Next, we suppose that $u(t)$ exists for all times $t \geq 0$, i.\,e., we can take $T =+ \infty$. From \eqref{ineq:L2crit} we infer that 
\be \label{ineq:L2crit_2}
\MM_R[u(t)] \leq -c t \quad \mbox{for $t \geq t_0$},
\ee
with some sufficiently large time $t_0 > 0$ and some constant $c > 0$ depending only on $s$ and $E[u_0] < 0$. On the other hand, if we invoke Lemma \ref{lem:MR_estimate}, we see that
\be \label{ineq:L2crit_3}
\begin{aligned}
\left | \MM_R[u(t)] \right | & \lesssim C(\phi_R) \left ( \| |\nabla|^{1/2} u(t) \|_{L^2}^2 + \| u(t) \|_{L^2} \| |\nabla|^{1/2} u(t) \|_{L^2} \right ) \\
& \lesssim C(\phi_R) \left ( \| |\nabla|^{1/2} u(t) \|_{L^2}^2 + 1 \right )  \lesssim C(\phi_R) \left ( \| (-\DD)^{s/2} u(t) \|_{L^2}^{1/s} + 1 \right ),
\end{aligned}
\ee
where we also used the conservation of $L^2$-mass of $u(t)$ together with the interpolation  estimate $\| |\nabla|^{1/2} u \|_{L^2} \leq \| (-\DD)^{s/2} u \|_{L^2}^{1/2s} \| u \|_{L^2}^{1-1/2s}$ for $s > 1/2$. By combining \eqref{ineq:L2crit_3} and \eqref{ineq:L2crit_2}, we finally get
\be
\| (-\DD)^{s/2} u(t) \|_{L^2} \geq C t^s \quad \mbox{for $t \geq t_*$,}
\ee
with some sufficiently large time $t_* > 0$ and some constant $C > 0$ depending only on $u_0, s$, and $N$. 

The proof of Theorem \ref{thm:L2super} is now complete. \hfill $\qed$

\section{Blowup on Bounded Domains: Proof of Theorem \ref{thm:dom}}

Let $N \geq 1$, $s \in (1/2,1)$, and $0 < s_c \leq s$. Suppose that $\Om \subset \R^N$ is a bounded and star-shaped domain with smooth boundary $\pt \Om$. Without loss of generality we can assume that $\Om$ is star-shaped with respect to the origin $0 \in \Om$, i.\,e., we have $\alpha x \in \Om$ for any $x \in \Om$ and any $\alpha \in [0,1]$. In the following, we assume that
$$
u \in C([0,T); D((-\DD)^s))
$$ 
solves problem \eqref{eq:fnlsO}. 

\subsection{Virial Law on $\Om$}
We define the virial of $u(t)$ as
$$
\MM_\Om[u(t)] := 2 \, \mathrm{Im} \int_{\Om} \overline{u}(t)( x \cdot \nabla u (t))  \, dx .
$$
To see that $\MM_{\Om}[u(t)]$ is well-defined, we recall that $D((-\DD)^s) = H^{s(2s)}_2(\overline{\Om})$ and the inclusion $H^{s(2s)}_2(\overline{\Om}) \subset H^{s(1)}_2(\overline{\Om})$ (since $s > 1/2$) from \cite[Example 7.2~and Eqn.~(1.31)]{Gr-15}. Moreover, we have the equality\footnote{For $0 \leq s \leq 1$, the spaces $H^s_0(\overline{\Om})$ introduced in \cite{Gr-15}  coincide with the space $H^s_0(\Om)$ defined in \eqref{def:Hs0}.} $H^{s(1)}(\overline{\Om})=\dot{H}_0^1(\Om)$ by \cite[Theorem 5.4]{Gr-15} using that $s-1 \in (-1/2,1/2)$. Thus $u(t) \in D((-\DD)^s)$ implies that $u(t) \in \dot{H}^1_0(\Om)$, whence it follows $u(t) \in H^1_0(\Om)$ by Poincar\'e's inequality, since $\Om$ is bounded.  
 
We now establish the following key inequality for the time evolution of the virial on $\Om$.
\begin{lemma} \label{lem:MOm}
For any $t \in [0,T)$, we have
$$
\frac{d}{dt} \MM_\Om[u(t)] \leq 4 \sigma N E_\Om[u_0] -  2 (  \sigma N - 2s) \int_{\Om} \overline{u}(t) (-\DD)^{s} u(t) \, dx .
$$
\end{lemma}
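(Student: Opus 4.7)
The plan is to mirror the general virial formula of Lemma \ref{lem:virial_gen} with the formal choice $\phi(x)=\tfrac{1}{2}|x|^2$ (so $\nabla\phi=x$), but adapted to the bounded domain $\Om$. Writing $\MM_\Om[u(t)]=\langle u,\Gamma u\rangle$ with $\Gamma=-i(\nabla\cdot x+x\cdot\nabla)$, differentiating in $t$, and using the equation exactly as in \eqref{eq:Mder0}, the derivative splits into a dispersive and a nonlinear commutator pairing,
\[
\tfrac{d}{dt}\MM_\Om[u(t)] = \langle u,[(-\DD)^s,i\Gamma]u\rangle+\langle u,[-|u|^{2\sigma},i\Gamma]u\rangle.
\]
Because the excerpt has established $D((-\DD)^s)\subset H^1_0(\Om)$ via \cite{Gr-15}, each pairing is well defined for $u(t)\in D((-\DD)^s)$, and termwise time differentiation under the integral sign is justified.

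For the nonlinear commutator I would reproduce Step~2 of the proof of Lemma \ref{lem:virial_gen} verbatim: the integration by parts produces no boundary contribution because $u(t)\in H^1_0(\Om)$ forces $|u|^{2\sigma+2}$ to vanish on $\pt\Om$, so the outcome is exactly $-\tfrac{2\sigma N}{\sigma+1}\int_\Om |u|^{2\sigma+2}\,dx$. The dispersive commutator is the interesting one. Expanding $i\Gamma=2x\cdot\nabla+N$, using self-adjointness of $(-\DD)^s$, and integrating by parts in the transport term (again without a boundary term, since $u$ has trace zero on $\pt\Om$), a direct manipulation reduces it to
\[
\langle u,[(-\DD)^s,i\Gamma]u\rangle = 4\,\mathrm{Re}\int_\Om\overline{(x\cdot\nabla u)}\,(-\DD)^s u\,dx+2N\int_\Om\overline{u}\,(-\DD)^s u\,dx.
\]

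The key input is now the fractional Pohozaev identity of Ros-Oton and Serra \cite{RoSe-14}. Applied to the real and imaginary parts of $u$ separately, it yields
\[
\mathrm{Re}\int_\Om\overline{(x\cdot\nabla u)}\,(-\DD)^s u\,dx = \tfrac{2s-N}{2}\int_\Om\overline{u}\,(-\DD)^s u\,dx - \tfrac{\Gamma(1+s)^2}{2}\int_{\pt\Om}\Bigl|\tfrac{u}{d^{\,s}}\Bigr|^2(x\cdot\nu)\,d\sigma,
\]
with $d(x)=\mathrm{dist}(x,\pt\Om)$ and $\nu$ the outer unit normal to $\pt\Om$. Substituting this back, the two contributions proportional to $N\int\overline u(-\DD)^s u$ cancel, leaving $4s\int_\Om\overline u(-\DD)^s u\,dx$ minus a nonnegative surface integral---the nonnegativity being precisely the content of star-shapedness of $\Om$ with respect to the origin, which forces $x\cdot\nu\geq 0$ on $\pt\Om$. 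Discarding this boundary term gives the one-sided bound $\langle u,[(-\DD)^s,i\Gamma]u\rangle\leq 4s\int_\Om\overline u(-\DD)^s u\,dx$.

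Finally I would combine the two commutator estimates and use conservation of $E_\Om[u(t)]=E_\Om[u_0]$ to rewrite $-\tfrac{2\sigma N}{\sigma+1}\int|u|^{2\sigma+2}$ as $4\sigma N E_\Om[u_0]-2\sigma N\int\overline u(-\DD)^s u$; collecting the coefficients of $\int_\Om\overline u(-\DD)^s u$ then reproduces exactly $4\sigma N E_\Om[u_0]-2(\sigma N-2s)\int_\Om\overline u(-\DD)^s u$, as claimed. The main obstacle I anticipate is the Pohozaev step: one must check that elements of $D((-\DD)^s)=H^{s(2s)}_2(\overline\Om)$ possess enough boundary regularity for the identity of \cite{RoSe-14} to be invoked directly---in particular, that the trace $u/d^{\,s}$ on $\pt\Om$ is well defined and that the bulk pairings appearing in their identity are meaningful. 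This is precisely why the lemma is formulated for operator-domain-valued solutions; once the function-space framework of \cite{Gr-15} is in hand, the identity of \cite{RoSe-14} can be used as a black box.
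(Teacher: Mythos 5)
Your structural outline matches the paper's route: differentiate $\MM_\Om[u(t)]$, substitute the equation, split into a dispersive pairing and a nonlinear pairing, integrate by parts on $\Om$ using $D((-\DD)^s)\subset H^1_0(\Om)$, and close with a Pohozaev-type control of $\mathrm{Re}\int_\Om(x\cdot\nabla u)\,(-\DD)^s\overline{u}\,dx$ followed by rearrangement via conservation of $E_\Om$. The algebra is correct throughout: the $N$-terms of $i\Gamma=2x\cdot\nabla+N$ cancel in the dispersive commutator, the nonlinear term integrates by parts cleanly because $u(t)\in H^1_0(\Om)$, and the final substitution $-\tfrac{2\sigma N}{\sigma+1}\int|u|^{2\sigma+2}=4\sigma N E_\Om[u_0]-2\sigma N\int\overline{u}(-\DD)^s u$ reproduces the claimed coefficients.

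The one genuine divergence is the Pohozaev input, and it is precisely where your argument has a gap. You invoke the full Ros-Oton--Serra identity, with its explicit boundary term $-\tfrac{\Gamma(1+s)^2}{2}\int_{\pt\Om}|u/d^s|^2(x\cdot\nu)\,d\sigma$, applied to the real and imaginary parts of $u$. You acknowledge that one must check that elements of $D((-\DD)^s)=H^{s(2s)}_2(\overline\Om)$ have enough boundary regularity for the trace $u/d^s$ to make sense and for the identity to hold, but then assert that this can be taken ``as a black box.'' That assertion is unsupported, and the remark following Lemma~\ref{lem:poho} in the paper explicitly flags this: the Ros-Oton--Serra identity is proved under ``additional regularity conditions'' not available for general elements of the operator domain. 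The paper instead proves its own Lemma~\ref{lem:poho}, a one-sided Pohozaev \emph{inequality}
$$
\mathrm{Re}\int_\Om(x\cdot\nabla u)(-\DD)^s\overline u\,dx\ \leq\ \tfrac{2s-N}{2}\int_\Om\overline u(-\DD)^s u\,dx,
$$
valid under the mere hypotheses $u\in H^1_0(\Om)$, $(-\DD)^s u\in L^2(\Om)$. Its proof uses the dilation $u_\lambda(x)=u(\lambda x)$ for $\lambda>1$: star-shapedness of $\Om$ w.r.t.\ the origin ensures $u_\lambda\in H^s_0(\Om)$, the quantity $\int_\Om u_\lambda(-\DD)^s\overline u$ is rescaled on the Fourier side, and the boundary defect is absorbed into a one-sided $\lambda$-derivative of $I_\lambda=\mathrm{Re}\int w_\lambda\overline w_{1/\lambda}$ which is $\leq 0$ by Cauchy--Schwarz, without ever representing it as a surface integral. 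This is exactly the upper bound you need; the explicit boundary term is never required. So your plan is correct in outline and in the final arithmetic, but to be rigorous in the stated function-space setting you would either need to establish the missing boundary-trace regularity for $H^{s(2s)}_2(\overline\Om)$, or replace the full identity by the weaker, regularity-light inequality of Lemma~\ref{lem:poho}, which is what the paper does.
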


\begin{proof}
For notational convenience, we denote $\MM_\Om(t) := \MM_\Om[u(t)]$ in the following. Furthermore, we write
$\langle f, g \rangle = \int_{\Om} \overline{f} g \, dx$ for the inner product in $L^2(\Om)$.

{\bf Step 1.} 
First, we show that $t \mapsto \MM_\Om(t)$ is of class $C^1$ and calculate its derivative. Indeed, let $h \neq 0$ and assume $t, t+h \in [0,T)$. We find that
\begin{align*}
& \MM_\Om(t+h) - \MM_\Om(t) \\
&= 2 \, \mathrm{Im}  \left \langle u(t+h)-u(t), x \cdot \nabla u(t+h) \right \rangle  + 2 \, \mathrm{Im} \left \langle u(t), x \cdot \nabla (u(t+h) -u(t)) \right \rangle  \\
& = 2 \, \mathrm{Im} \left \langle u(t+h)-u(t), x \cdot \nabla u(t+h) \right \rangle - 2 \, \mathrm{Im} \left \langle x \cdot \nabla u(t), u(t+h)-u(t) \right \rangle \\
& \quad - 2 N \, \mathrm{Im} \left  \langle u(t), u(t+h) -u(t) \right  \rangle,
\end{align*}  
where we used the identity $x \cdot \nabla f = \nabla \cdot (xf) - N f$ and we integrated by parts recalling that $u(t), u(t+h) \in  D((-\DD)^s) \subset H^1_0(\Om)$ for $s > 1/2$. Since $u \in C^0([0,T); H^1_0(\Om)) \cap C^1([0,T); L^2(\Om))$, we can take the limit $\frac{1}{h} [\MM_{\Om}(t+h)- \MM_{\Om}(t)]$ as $h \to 0$ to deduce that $\MM_{\Om}(t)$ belongs to $C^1$ with its derivative given by
\be \label{eq:Mder}
\frac{d}{dt} \MM_{\Om}(t) = 4 \, \mathrm{Im} \left \langle \pt_t u(t), x \cdot \nabla u(t) \right \rangle +2 N \, \mathrm{Im} \left \langle \pt_t u(t), u(t) \right \rangle =: (I) + (II),
\ee
using that $\mathrm{Im} \, \langle f,g \rangle = - \mathrm{Im} \, \langle g, f \rangle$. 

\medskip
{\bf Step 2.}
We analyze the term $(I)$ as follows. Using that $\pt_t u= -i (-\DD)^s u +i |u|^{2 \sigma} u$, we get
\begin{align*}
(I) & = 4 \, \mathrm{Re} \int_{\Om}   (-\DD)^s{\overline u}(t)  \, ( x \cdot \nabla u(t)) \, dx  - 4 \, \mathrm{Re} \int_{\Om} |u(t)|^{2 \sigma} \overline{u}(t) \, (x \cdot \nabla u(t)) \, dx   \\
& =  4 \, \mathrm{Re} \int_{\Om}   (-\DD)^s{\overline u}(t)  \, ( x \cdot \nabla u(t)) \, dx - 2 \int_{\Om} x \cdot ( |u(t)|^{2 \sigma} \nabla (|u(t)|^2)) \, dx .
\end{align*}
Here  we also used the simple fact that $\overline{(-\DD)^s u}= (-\DD)^s \overline{u}$. Next, we apply the Pohozaev-type estimate in Lemma \ref{lem:poho} and use that $|u|^{2 \sigma} \nabla |u|^2 = \frac{1}{\sigma+1} \nabla (|u|^{2 \sigma+2})$ and integrate by parts to find that
\be 
(I) \leq (4s - 2N) \int_{\Om} u(t) (-\DD)^s \overline{u}(t) \, dx + \frac{2 N}{\sigma+1} \int_{\Om} |u(t)|^{2 \sigma+2} \,dx .
\ee

Next, for the second term on the right-hand side in \eqref{eq:Mder}, a direct calculation shows 
$$
(II)  =  2 N \left (  \int_{\Om} u (-\DD)^s \overline{u} \, dx - \int_{\Om} |u(t)|^{2 \sigma+2} \, dx \right ).
$$
Going back to \eqref{eq:Mder}, we conclude that
\begin{align*}
\frac{d}{dt} \MM_{\Om}[u(t)]  = (I) + (II) &  \leq 4s \int_{\Om} u(t) (-\DD)^s \overline{u}(t) \, dx - \frac{2 N \sigma}{\sigma+1} \int_{\Om} |u(t)|^{2 \sigma+2} \, dx \\
& = 4 \sigma N  E_{\Om}[u(t)] - 2( \sigma N - 2s) \int_\Om \overline{u}(t) (-\DD)^s u(t) \, dx,
\end{align*}
where the last step follows from conservation of energy. This completes the proof of Lemma \ref{lem:MOm}.
\end{proof}

\subsection{Proof of Theorem \ref{thm:dom}}

With Lemma \ref{lem:MOm} at hand, we can now follow  the arguments used in the proof of Theorem \ref{thm:L2super} above. For the reader's convenience, we provide the details adapted to the case of a bounded domain. 

Let $\delta = \sigma N - 2s > 0$. Suppose that $E_\Om[u_0] < 0$ and assume that $T = +\infty$ holds, i.\,e., the solution $u(t)$ exists for all times $t \geq 0$. By integrating the inequality in Lemma \ref{lem:MOm}, we deduce that $\MM_\Om[u(t)] \leq 0$ for all $t \geq t_1$ with some sufficiently large time $t_1>0$ and that
\be \label{ineq:MOm1}
\MM_{\Om}[u(t)] \leq  - \delta \int_{t_1}^t \int_{\Om} u(s) (-\DD)^s \overline{u}(s) \, dx \, ds \leq 0 \quad \mbox{for all $t \geq t_1$} .
\ee
Now, let $R > 0$ be a sufficiently large radius such that $\overline{\Om} \subset B_R(0)$. Take a function $\phi \in C^\infty_c(\R^N)$ with $\mathrm{supp} \, \phi \subset B_{2R}(0)$ and $\nabla \phi(x) \equiv x$ on $B_R(0)$. Since $u(t) \in H^s_0(\Omega)$ (i.\,e., $u(t) \in H^s(\R^N)$ with $u \equiv 0$ on $\R^N \setminus \Om$), we find
$$
\MM_{\Om}[u(t)] = 2\, \mathrm{Im} \int_{\Om} \overline{u}(t) (x \cdot \nabla u(t)) \, dx = 2 \, \mathrm{Im} \int_{\R^N} \overline{u}(t) (\nabla \phi \cdot \nabla u(t)) \, dx.
$$
Applying Lemma \ref{lem:MR_estimate} and using that $\|u(t) \|_{L^2(\R^N)} = \| u(t) \|_{L^2(\Om)} \lesssim 1$ by $L^2(\Om)$-mass conservation, we get
\begin{align*}
\left | \MM_{\Om}[u(t)] \right | & \lesssim C(\phi) \left (\| |\nabla|^{1/2} u(t) \|_{L^2(\R^N)}^2 +  \| |\nabla|^{1/2} u(t) \|_{L^2(\R^N)} \right ) \\
& \lesssim C(\phi) \left ( \| |\nabla|^{1/2} u(t) \|_{L^2(\R^N)}^2 + 1 \right )  \lesssim C(\phi) \left ( \| (-\DD)^{s/2} u(t) \|_{L^2(\R^N)}^{1/s} + 1 \right ) \\
& \lesssim C(\phi ) \left ( \left ( \int_{\Om} \overline{u}(t) (-\DD)^s u(t) \, dx \right )^{\frac{1}{2s}} + 1 \right ),
\end{align*}
using the interpolation estimate $\| |\nabla|^{1/2} u \|_{L^2} \leq \| (-\DD)^{s/2} u \|_{L^2}^{1/2s} \| u \|_{L^2}^{1-1/2s}$ for $s \geq 1/2$, as well as $\| (-\DD)^{s/2} u \|_{L^2}^2 = \int_{\Om} \overline{u} (-\DD)^s u \, dx$ for $u \in H^s_0(\Om)$. Next, by adapting the arguments using energy considerations given  in the proof of Theorem \ref{thm:L2super}, we get the uniform lower bound
\be
\int_{\Om} \overline{u}(t) (-\DD)^s u(t) \, dx \gtrsim 1 \quad \mbox{for all $t \geq 0$}.
\ee
Hence, we conclude that
\be
\left |\MM_{\Om}[u(t)] \right | \lesssim C(\phi) \left ( \int_{\Om} \overline{u}(t) (-\DD)^s u(t) \, dx \right )^{\frac{1}{2s}} ,
\ee
for any $t \geq 0$. Thus by going back to \eqref{ineq:MOm1}, we obtain
\be
\MM_{\Om}[u(t)] \lesssim - C (\phi) \int_{t_1}^t |\MM_{\Om}[u(\tau)]|^{2s} \, d \tau \quad \mbox{for all $t \geq t_1$}.
\ee
Since $2s > 1$, this integral inequality implies that  $\MM_{\Om}[u(t)] \lesssim -C(\phi) |t - t_*|^{1-2s}$ tends to $-\infty$  as $t \nearrow t_*$ with some finite $t_* < + \infty$. Therefore, the solution $u(t)$ fails to exist for all times $t \geq 0$ and hence $T < +\infty$ must hold.

The proof of Theorem \ref{thm:dom} is now complete. \hfill $\qed$

\begin{remark*}
{\em For the half-wave case $s=1/2$ and $0 < s \leq s_c$, the arguments in the proof of Theorem \ref{thm:dom} formally yield the following result: If $u \in C([0,T); D((-\DD)^{1/2}))$ solves \eqref{eq:fnlsO} with negative energy $E_{\Om}[u_0] < 0$, then $u(t)$ either blows up in finite time or $u(t)$ blows up in infinite time such that
$$
\| | \nabla|^{1/2} u(t) \|_{L^2(\Om)} \gtrsim e^{a t} \ \ \mbox{for $t \geq 0$},
$$
with some constant $a> 0$. However, we have $D((-\DD)^{1/2})) = H^{1/2(1)}_2(\overline{\Om})$ and it is only known that $H^{1/2(1)}_2(\overline{\Om}) \subset H_0^{1-\eps}(\Om)$ for any $\eps \in (0,1]$; see \cite[Theorem 5.4]{Gr-15}. Therefore, it is not guaranteed that the pairing $\langle (-\DD)^{1/2} u, x \cdot \nabla u \rangle$ appearing above is well-defined for $u \in D((-\DD)^{1/2})$.}
\end{remark*}

\begin{appendix}

\section{Various Estimates}

\begin{lemma} \label{lem:MR_estimate}
Let $N \geq 1$ and suppose $\phi : \R^N \to \R$ is such that $\nabla \phi \in W^{1,\infty}(\R^N)$. Then, for all $u \in H^{1/2}(\R^N)$, it holds that
$$
\left | \int_{\R^N} \overline{u}(x) \nabla \phi(x) \cdot \nabla u(x) \, dx \right | \leq  C \left ( \| |\nabla|^{1/2} u \|_{L^2}^2 +   \| u \|_{L^2} \| |\nabla|^{1/2} u \|_{L^2} \right ),
$$
with some constant $C > 0$ that depends only on $\| \nabla \phi \|_{W^{1,\infty}}$ and $N$.
\end{lemma}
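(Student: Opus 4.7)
The strategy is to split one whole derivative into two half-derivatives, shift one of them by self-adjointness of $|\nabla|^{1/2}$, and close the estimate by a multiplier bound on $H^{1/2}$. For $u\in H^{1/2}(\R^N)$ the gradient $\nabla u$ lies only in $H^{-1/2}(\R^N)$, so the expression is to be interpreted as the duality pairing $\langle \overline{u}\,\nabla\phi,\nabla u\rangle_{H^{1/2}\times H^{-1/2}}$ (note that $\overline u\,\nabla\phi\in H^{1/2}$ because $\nabla\phi\in W^{1,\infty}$). Both sides of the claimed inequality are continuous in $u\in H^{1/2}$, so by density it suffices to prove the bound for $u\in C^\infty_c(\R^N)$, where every integral is classical.

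\textbf{Factorization and Cauchy--Schwarz.} For each index $k\in\{1,\dots,N\}$, the Riesz transform $R_k=\partial_k|\nabla|^{-1}$ (symbol $i\xi_k/|\xi|$) gives the identity $\partial_k u=|\nabla|^{1/2}v_k$ with $v_k:=R_k\,|\nabla|^{1/2}u$, and $\|v_k\|_{L^2}\le\||\nabla|^{1/2}u\|_{L^2}$ because $R_k$ is $L^2$-bounded with norm $\le 1$. Using the self-adjointness of $|\nabla|^{1/2}$ on $L^2$,
\[
I\;:=\;\int_{\R^N}\overline{u}\,\nabla\phi\cdot\nabla u\,dx\;=\;\sum_{k=1}^{N}\int_{\R^N}|\nabla|^{1/2}\bigl(\overline{u}\,\partial_k\phi\bigr)\,v_k\,dx,
\]
and Cauchy--Schwarz therefore yields
\[
|I|\;\le\;\sum_{k=1}^{N}\bigl\||\nabla|^{1/2}(u\,\partial_k\phi)\bigr\|_{L^2}\,\||\nabla|^{1/2}u\|_{L^2}.
\]

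\textbf{Multiplier estimate and conclusion.} It remains to show that multiplication by $\partial_k\phi\in W^{1,\infty}(\R^N)$ is continuous on $H^{1/2}(\R^N)$:
\[
\bigl\||\nabla|^{1/2}(u\,\partial_k\phi)\bigr\|_{L^2}\;\lesssim\;\|\partial_k\phi\|_{W^{1,\infty}}\,\|u\|_{H^{1/2}}.
\]
This follows by complex interpolation from the trivial bound $\|fu\|_{L^2}\le\|f\|_{L^\infty}\|u\|_{L^2}$ and the Leibniz-rule bound $\|fu\|_{H^1}\le 2\|f\|_{W^{1,\infty}}\|u\|_{H^1}$, together with $H^{1/2}=[L^2,H^1]_{1/2}$. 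Plugging back yields
\[
|I|\;\le\;C\,\|\nabla\phi\|_{W^{1,\infty}}\bigl(\|u\|_{L^2}+\||\nabla|^{1/2}u\|_{L^2}\bigr)\||\nabla|^{1/2}u\|_{L^2},
\]
which is exactly the claimed bound once the first factor is expanded. The only real obstacle is the $H^{1/2}$ multiplier estimate; if one wishes to avoid abstract interpolation, it can be proved directly from the Gagliardo double-integral representation of $\|\cdot\|_{\dot H^{1/2}}$ by using the two-regime estimate $|\partial_k\phi(x)-\partial_k\phi(y)|\le\min(\|\nabla^{2}\phi\|_{L^\infty}|x-y|,\,2\|\nabla\phi\|_{L^\infty})$ and splitting the $y$-integral at $|x-y|\sim\|\nabla\phi\|_{L^\infty}/\|\nabla^{2}\phi\|_{L^\infty}$ to obtain a convergent kernel bound.
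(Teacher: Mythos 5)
Your proof is correct and follows the same core strategy as the paper: factor $\nabla=|\nabla|^{1/2}\frac{\nabla}{|\nabla|}|\nabla|^{1/2}$, move one half-derivative onto $(\nabla\phi)u$ by self-adjointness, apply Cauchy--Schwarz and $L^2$-boundedness of the Riesz transforms, and close with the $\dot H^{1/2}$ multiplier bound $\||\nabla|^{1/2}((\nabla\phi)u)\|_{L^2}\lesssim\|\nabla\phi\|_{W^{1,\infty}}(\||\nabla|^{1/2}u\|_{L^2}+\|u\|_{L^2})$. The only difference is in how that multiplier bound is obtained: the paper proves it directly from the Gagliardo double-integral representation of $\|\cdot\|_{\dot H^{1/2}}$ (adapting \cite[Theorem~7.16]{LiLo-01}), splitting the kernel integral at $|x-y|\le 1$ versus $|x-y|>1$, while you derive it by complex interpolation between the trivial $L^2\to L^2$ bound and the Leibniz $H^1\to H^1$ bound for the pointwise multiplier $\partial_k\phi$. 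Both are sound; the interpolation route is cleaner but relies on the identification $H^{1/2}=[L^2,H^1]_{1/2}$, whereas the Gagliardo computation (which you also sketch as an alternative) is self-contained and gives an explicit dependence on $\|\nabla\phi\|_{L^\infty}$ and $\|\nabla^2\phi\|_{L^\infty}$ separately.
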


\begin{proof}
By writing $\nabla = |\nabla|^{1/2} \frac{\nabla}{|\nabla|} |\nabla|^{1/2}$ and using the Cauchy-Schwarz inequality, we estimate
\be \label{ineq:A1}
\begin{aligned}
\left | \int_{\R^N} \overline{u}(x) \nabla \phi(x) \cdot \nabla u(x) \, dx \right | & = \left | \left \langle |\nabla|^{1/2} ((\nabla \phi) u), \frac{\nabla}{|\nabla|} |\nabla|^{1/2} u \right \rangle \right | \\ &  \leq \| |\nabla|^{1/2} ((\nabla \phi) u) \|_{L^2} \left \| \frac{\nabla}{|\nabla|} |\nabla|^{1/2} u \right \|_{L^2} \\ &  \lesssim \| |\nabla|^{1/2}(  (\nabla \phi) u) \|_{L^2} \| |\nabla|^{1/2} u \|_{L^2},
\end{aligned}
\ee
where in the last step we used the fact that the Riesz projector $\nabla /|\nabla|$ is a bounded operator on $L^2(\R^N)$. Now we claim that
\be \label{ineq:LiLo}
\| |\nabla|^{1/2} (( \nabla \phi )u ) \|_{L^2} \lesssim \| \nabla \phi \|_{W^{1, \infty}}  \left ( \| |\nabla|^{1/2} u \|_{L^2} + \| u \|_{L^2} \right ).
\ee
Indeed, this estimate can be deduced from adapting the proof of \cite[Theorem 7.16]{LiLo-01} as follows. We note
\begin{align*}
& \| |\nabla|^{1/2} (( \nabla \phi ) u ) \|_{L^2}^2 =  \mbox{(const)} \cdot \iint_{\R^N \times \R^N} \frac{|\nabla \phi(x) u(x) - \nabla \phi(y) u(y)|^2}{|x-y|^{N+1}} \,dx \, dy \\
& \lesssim \iint_{\R^N \times \R^N} \frac{ |u(x)-u(y)|^2 |\nabla \phi(y)|^2}{|x-y|^{N+1}} \, dx \, dy +  \iint_{\R^N \times \R^N} \frac{ |\nabla \phi(x) - \nabla \phi(y)|^2 |u(x)|^2}{|x-y|^{N+1}} \, dx \,dy  \\
& \lesssim \| \nabla \phi \|_{L^\infty}^2 \iint_{\R^N \times \R^N} \frac{|u(x)-u(y)|^2}{|x-y|^{N+1}} \, dx \, dy + \| \nabla^2 \phi \|_{L^\infty}^2 \iint_{|x-y| \leq 1} \frac{1}{|x-y|^{N-1}} |u(x)|^2 \, dx \, dy \\
& \quad + \| \nabla \phi \|_{L^\infty}^2 \iint_{|x-y| > 1} \frac{1}{|x-y|^{N+1}} | u(x) |^2 \,dx \,dy \\
& \lesssim \| \nabla \phi \|_{W^{1,\infty}}^2 \left ( \| |\nabla|^{1/2} u \|_{L^2}^2 + \| u \|_{L^2}^2 \right ),
 \end{align*}
 whence \eqref{ineq:LiLo} follows by taking the square root. If we insert \eqref{ineq:LiLo} back into \eqref{ineq:A1}, we finish the proof.
\end{proof}

\begin{lemma} \label{lem:u_m_inter}
Let $N \geq 1$, $s \in (0,1)$, and suppose $\phi : \R^N \to \R$ with $\Delta \phi \in W^{2, \infty}(\R^N)$. Then, for all $u \in L^2(\R^N)$, we have
$$
\left | \int_0^\infty m^s \int_{\R^N} (\Delta^2 \phi) |u_m|^2 \, dx \, dm \right | \lesssim  \| \DD^2 \phi \|_{L^\infty}^s  \| \DD \phi \|_{L^\infty}^{1-s}\| u \|_{L^2}^2.
$$
\end{lemma}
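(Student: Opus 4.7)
The plan is to split the $m$-integral at a threshold $M > 0$ chosen at the end, bounding the high-frequency tail via a direct $L^\infty$--$L^2$ estimate and the low-frequency part via a gain-two-derivatives argument, then optimizing in $M$.

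For high frequencies $m \geq M$, I will use the resolvent bound $\|(-\Delta+m)^{-1}\|_{L^2 \to L^2} \leq 1/m$ to get $\|u_m\|_{L^2}^2 \leq c_s^2 \|u\|_{L^2}^2/m^2$, whence
\begin{equation*}
\left| \int_{\R^N} (\Delta^2 \phi) |u_m|^2 \, dx \right| \leq \|\Delta^2 \phi\|_{L^\infty} \|u_m\|_{L^2}^2 \lesssim \|\Delta^2 \phi\|_{L^\infty} \|u\|_{L^2}^2 / m^2.
\end{equation*}
Multiplying by $m^s$ and integrating over $[M,\infty)$ converges since $s<1$ and contributes $\lesssim \|\Delta^2 \phi\|_{L^\infty} \|u\|_{L^2}^2 M^{s-1}/(1-s)$.

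For low frequencies $m \leq M$, I will move two derivatives off $|u_m|^2$ and onto $\phi$. Since $u_m \in H^2(\R^N)$, the resolvent equation $-\Delta u_m = c_s u - m u_m$ yields
\begin{equation*}
\Delta(|u_m|^2) = 2 |\nabla u_m|^2 + 2 \mathrm{Re}(\overline{u_m} \Delta u_m) = 2 |\nabla u_m|^2 + 2m |u_m|^2 - 2 c_s \mathrm{Re}(\overline{u_m} u).
\end{equation*}
Using the Fourier-side bounds $\|\nabla (-\Delta+m)^{-1}\|_{L^2 \to L^2} \leq (2\sqrt{m})^{-1}$ and $\|(-\Delta+m)^{-1}\|_{L^2 \to L^2} \leq 1/m$, each of the three terms on the right is $O(\|u\|_{L^2}^2/m)$ in $L^1(\R^N)$. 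Thus, after integration by parts,
\begin{equation*}
\left| \int_{\R^N} (\Delta^2 \phi) |u_m|^2 \, dx \right| = \left| \int_{\R^N} (\Delta \phi) \Delta(|u_m|^2) \, dx \right| \lesssim \|\Delta \phi\|_{L^\infty} \|u\|_{L^2}^2/m.
\end{equation*}
Multiplying by $m^s$ and integrating over $(0,M]$ converges since $s>0$ and contributes $\lesssim \|\Delta \phi\|_{L^\infty} \|u\|_{L^2}^2 M^s/s$.

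Combining the two pieces gives the bound $\|u\|_{L^2}^2 \bigl( \|\Delta^2 \phi\|_{L^\infty} M^{s-1} + \|\Delta \phi\|_{L^\infty} M^s \bigr)$, and the choice $M = \|\Delta^2 \phi\|_{L^\infty}/\|\Delta \phi\|_{L^\infty}$ balances the two terms and produces exactly the advertised upper bound $\|\Delta^2 \phi\|_{L^\infty}^s \|\Delta \phi\|_{L^\infty}^{1-s} \|u\|_{L^2}^2$. The main subtlety will be rigorously justifying the integration by parts, since $\Delta\phi$ is not assumed compactly supported; however, $|u_m|^2 \in W^{1,1}(\R^N)$ with $\Delta(|u_m|^2) \in L^1$ and $\Delta\phi, \Delta^2\phi \in L^\infty$, so the identity is legitimate either via mollification of $\Delta\phi$ or, more cleanly, by noting that both sides are continuous bilinear forms in $(\Delta\phi, u_m) \in L^\infty \times H^2$, so one may first assume $\Delta\phi$ Schwartz and pass to the limit.
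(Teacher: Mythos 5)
Your proof is correct and takes essentially the same approach as the paper: split the $m$-integral at a threshold, bound the high-frequency tail directly via $\|u_m\|_{L^2}\lesssim m^{-1}\|u\|_{L^2}$, move two derivatives onto $\phi$ for the low-frequency part to get a $\|\Delta\phi\|_{L^\infty}/m$ bound, and optimize the threshold. The only cosmetic difference is that you substitute the resolvent equation to rewrite $\Delta u_m$, whereas the paper bounds $\|\Delta u_m\|_{L^2}\lesssim\|u\|_{L^2}$ directly; both yield the same $O(1/m)$ estimate.
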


\begin{remark*}{\em 
A direct application of H\"older's inequality together with \eqref{eq:Plancherel} yields the bound 
$$
\left | \int_0^\infty m^s \int_{\R^N} (\Delta^2 \phi) |u_m|^2 \, dx \, dm \right | \lesssim \| \DD^s \phi \|_{L^\infty} \| |\nabla|^{s-1} u \|_{L^2}^2.
$$
However, such a bound in terms of the negative order Sobolev norm $\| u \|_{\dot{H}^{s-1}}$ would be of no use to us. 
}
\end{remark*}

\begin{proof}
We extend the proof in \cite[Lemma B.3]{KrLeRa-13} to $N \geq 1$ and $s \in (0,1)$. Thus we split the $m$-integral into $\int_0^\Lambda \ldots + \int_{\Lambda}^\infty \ldots$ with a parameter  $\Lambda > 0$ to be determined below.  First, we integrate by parts in $x$ twice and use H\"older's inequality to find that
\begin{align*}
& \left | \int_0^\Lambda m^s \int_{\R^N} (\DD^2 \phi) |u_m|^2 \, dx \, dm \right | \\
&  =  \left | \int_0^\Lambda m^s \int_{\R^N} (\DD \phi) \left \{ (\DD  u_m) \overline{u_m} + u_m (\DD \overline{u_m}) + 2 \nabla u_m \cdot \nabla \overline{u_m} \right \}  dx \, dm \right | \\
& \lesssim \| \DD \phi \|_{L^\infty} \int_0^\Lambda m^s \left (  \| \Delta u_m \|_{L^2} \| u_m \|_{L^2} + \| \nabla u_m \|_{L^2}^2 \right )  dm \\
& \lesssim \| \DD \phi \|_{L^\infty} \| u \|_{L^2}^2\left (  \int_0^\Lambda m^{s-1} \, dm \right ) \lesssim \| \DD \phi \|_{L^\infty} \| u \|_{L^2}^2 \Lambda^s .
\end{align*}
Here, we have also used the bounds
$$
\| \Delta u_m \|_{L^2}  \lesssim \| u \|_{L^2}, \quad \| \nabla u_m \|_{L^2} \lesssim m^{-1/2} \| u \|_{L^2}, \quad \| u_m \|_{L^2} \lesssim m^{-1} \| u \|_{L^2},
$$
which are immediate consequences of the definition $u_m = c_s \cdot (-\DD+m)^{-1} u$ (as in \eqref{def:um}, \eqref{def:cs} above) and Plancherel's identity. Furthermore, we find that
\begin{align*}
& \left | \int_{\Lambda}^\infty m^s \int_{\R^N} (\DD^2 \phi) |u_m|^2 \, dx \, dm \right | \lesssim \| \DD^2 \phi \|_{L^\infty} \left ( \int_\Lambda^\infty m^s \| u_m \|_{L^2}^2 \, dm \right ) \\
& \lesssim \| \DD^2 \phi \|_{L^\infty} \| u \|_{L^2}^2 \left ( \int_\Lambda^\infty m^{s-2} \, dm \right )  \lesssim \| \DD^2 \phi \|_{L^\infty} \| u \|_{L^2}^2 \Lambda^{s-1} .
\end{align*}
In summary, we have shown that
$$
\left | \int_0^\infty m^s \int_{\R^N} (\DD^2 \phi) |u_m|^2 \, dx \, dm \right | \lesssim \left ( \| \DD \phi \|_{L^\infty} \Lambda^s + \| \DD^2 \phi \|_{L^\infty} \Lambda^{s-1} \right ) \| u \|_{L^2}^2
$$
for arbitrary $\Lambda > 0$. By minimizing the right-hand side with respect to $\Lambda$, we are led to the choice
$\Lambda = \frac{1-s}{s} \frac{\| \DD^2 \phi \|_{L^\infty}}{\| \DD \phi \|_{L^\infty}}$, which yields the desired bound.
\end{proof}

The next result provides a Pohozaev-type estimate for $(-\DD)^s$ (with exterior Dirichlet conditions) on bounded and star-shaped domains $\Om$. 

\begin{lemma}[Pohozaev-Type Estimate] \label{lem:poho}
Let $N \geq 1$ and $s \in (0,1)$. Suppose that $\Om \subset \R^N$ is a bounded domain that is star-shaped with respect to the origin $0 \in \Om$. Then, for all $u \in H^1_0(\Om)$ with $(-\DD)^s u \in L^2(\Om)$, we have the inequality
$$
\mathrm{Re} \, \int_{\Om} ( x \cdot \nabla u) (-\DD)^s \overline{u}  \, dx \leq \left ( \frac{2s-N}{2} \right ) \int_{\Om} u (-\DD)^s \overline{u} \, dx .
$$
\end{lemma}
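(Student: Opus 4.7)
The approach is to reduce the inequality to the Pohozaev \emph{identity} for the fractional Laplacian on bounded domains obtained by Ros-Oton and Serra~\cite{RoSe-14}, and then exploit the star-shapedness of $\Om$ to drop the boundary term with a favorable sign. For a real-valued $v$ of admissible regularity (in particular, $v\in H_0^1(\Om)$ with $(-\DD)^s v\in L^2(\Om)$, which by Grubb's characterization $D((-\DD)^s) = H^{s(2s)}_2(\overline{\Om})$ used earlier in the paper ensures that the trace of $v/\delta^s$ on $\pt\Om$ is well defined), their identity reads
\[
\int_{\Om} (x \cdot \nabla v)(-\DD)^s v \, dx \;=\; \frac{2s-N}{2}\int_{\Om} v\,(-\DD)^s v \, dx \;-\; \frac{\Gamma(1+s)^2}{2}\int_{\pt \Om} \left(\frac{v}{\delta^s}\right)^{\!2}\!(x \cdot \nu)\, d\sigma,
\]
where $\delta(x) = \mathrm{dist}(x,\pt\Om)$ and $\nu$ denotes the outer unit normal on $\pt\Om$.

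Since $\Om$ is star-shaped with respect to $0$, one has $x \cdot \nu(x) \geq 0$ pointwise on $\pt\Om$. The boundary integrand $(v/\delta^s)^2(x\cdot\nu)$ is therefore non-negative, and the boundary contribution is subtracted on the right-hand side. Discarding it immediately gives the real-valued version of the desired bound,
\[
\int_\Om (x\cdot\nabla v)(-\DD)^s v \, dx \;\leq\; \frac{2s-N}{2}\int_\Om v\,(-\DD)^s v \, dx,
\]
for every admissible real $v$.

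To pass to complex-valued $u$, I would decompose $u=u_1+iu_2$ with $u_1,u_2$ real. Since $(-\DD)^s$ commutes with complex conjugation, both $u_j$ lie in $H^1_0(\Om)$ with $(-\DD)^s u_j\in L^2(\Om)$, and $(-\DD)^s u_j$ is real-valued. A direct expansion yields
\[
\mathrm{Re}\int_\Om (x\cdot\nabla u)(-\DD)^s \overline{u}\, dx \;=\; \sum_{j=1}^{2}\int_\Om (x\cdot\nabla u_j)(-\DD)^s u_j\, dx,
\]
and, correspondingly, $\int_\Om u(-\DD)^s\overline{u}\,dx = \sum_{j=1}^2 \int_\Om u_j(-\DD)^s u_j\, dx$ (this pairing is automatically real). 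Applying the real-valued inequality to each $u_j$ and summing produces the claim.

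The delicate point is verifying that the Ros-Oton-Serra identity is available at the weak regularity imposed in the statement, namely $u\in H_0^1(\Om)$ with $(-\DD)^s u\in L^2(\Om)$, rather than in the classical pointwise-solution setting of their original paper. I would handle this either by invoking Grubb's sharp boundary-regularity results (which pin down the trace of $u/\delta^s$ on $\pt\Om$ within the $\mu$-transmission framework), or by a density argument: approximate $u$ by smoother elements of the operator domain and pass to the limit. Because only an inequality is needed and the boundary term is discarded with a favorable sign, no delicate compensation between the bulk and boundary integrals is required, making the approximation step essentially routine.
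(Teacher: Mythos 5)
Your approach is correct in spirit but takes a genuinely different route from the paper, and the difference matters because of the regularity point you yourself flag. You invoke the full Ros-Oton--Serra Pohozaev \emph{identity} with the explicit boundary integral $\int_{\pt\Om}(u/\delta^s)^2(x\cdot\nu)\,d\sigma$, use $x\cdot\nu\geq 0$ on $\pt\Om$ (star-shapedness) to see that this term enters with a favorable sign, and then discard it; complex $u$ is handled by splitting into real and imaginary parts, which is fine since the cross terms cancel and $\int_\Om u(-\DD)^s\overline u\,dx$ is automatically real. The cost of this route is exactly what you identify: the Ros-Oton--Serra identity requires boundary regularity (a well-defined trace of $u/\delta^s$) that is not manifestly available for mere $u\in H^1_0(\Om)$ with $(-\DD)^s u\in L^2(\Om)$, and patching this with Grubb's $\mu$-transmission theory or a density argument adds a layer of technical work that is not entirely routine to make rigorous.

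The paper takes a softer path designed precisely to sidestep this issue. Writing $u_\lambda(x)=u(\lambda x)$ for $\lambda>1$ (so $u_\lambda$ still vanishes off $\Om$, by star-shapedness), it uses the weak convergence $(u_\lambda-u)/(\lambda-1)\weakto x\cdot\nabla u$ in $L^2(\Om)$ to express the left-hand side as a one-sided derivative at $\lambda=1^+$ of $\int_{\R^N} u_\lambda(-\DD)^s\overline u\,dx$. After a change of variables this splits into the bulk term $\frac{2s-N}{2}\int_\Om u(-\DD)^s\overline u\,dx$ plus $\frac12\frac{d}{d\lambda}\big|_{\lambda\downarrow 1}I_\lambda$ with $I_\lambda=\mathrm{Re}\int_{\R^N} w_\lambda\overline w_{1/\lambda}\,dy$, $w=(-\DD)^{s/2}u$. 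Cauchy--Schwarz gives $I_\lambda\leq I_1$ for every $\lambda$, hence the one-sided derivative is $\leq 0$ \emph{without ever identifying it as a boundary integral}. This requires only $u\in H^1_0(\Om)$, $(-\DD)^s u\in L^2(\Om)$, and the support property from star-shapedness, so no boundary-trace theory and no density argument are needed. In short: you prove the inequality by quoting the sharp identity and dropping a term; the paper proves it directly at the stated regularity by a scaling/Cauchy--Schwarz observation that makes the boundary contribution invisible. Your route is viable but would need you to actually carry out the approximation argument (or verify Grubb's trace theorem applies at this level), whereas the paper's version closes cleanly.
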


\begin{remark*}
{\em The idea of the proof goes back to Ros-Oton and Serra \cite{RoSe-14}, where in fact an identity is shown for $u$ that satisfy additional regularity conditions. In that case, the boundary term (given by  the one-sided derivative $\frac{d}{d\lambda} |_{\lambda \to 1^+} I_{\lambda}$ below) can be worked out explicitly. In our setting, we do not need this explicit form and we can allow for less strict regularity assumptions on $u$.
}
\end{remark*}

\begin{proof}
We adapt the arguments in \cite{RoSe-14}; see also \cite{RoSe-15}.
For $\lambda > 1$, we set $u_\lambda (x) = u(\lambda x)$. Since in particular $u \in H^1(\Om)$, we can show that
\be
\frac{u_\lambda - u}{\lambda -1} \ \ \weakto \ \ x \cdot \nabla u \ \ \mbox{weakly in $L^2(\Om)$ as $\lambda \downarrow 1$},
\ee
see, e.\,g., the proof of \cite[Lemma 4.2]{RoSe-15}. Since we also have $(-\DD)^s \overline{u} \in L^2(\Om)$, we deduce
\be
\int_{\Om} (x \cdot \nabla u) (-\DD)^s \overline{u} \, dx = \left .  \frac{d}{d \lambda} \right |_{\lambda \downarrow 1}  \int_{\Om} u_\lambda (-\DD)^s \overline{u} \, dx
\ee
Using that $u_\lambda, u \in H^s(\R^N)$ with $u_\lambda \equiv 0$ on $\R^N \setminus \Om$, we find
\begin{align*}
\int_{\Om} u_\lambda (-\DD)^s \overline{u} \, dx & = \int_{\R^N} u_\lambda (-\DD)^s \overline{u} \, dx  = \int_{\R^N} (-\DD)^{s/2} u_\lambda (-\DD)^{s/2} \overline{u} \, dx \\
& = \lambda^{\frac{2s-N}{2}} \int_{\R^N} w_{\sqrt{\lambda}} \overline{w}_{1/\sqrt{\lambda}} \, dy ,
\end{align*}
where $y = \sqrt{\lambda} x$, $w(x) = (-\DD)^{s/2}u(x)$, and $w_\lambda(x) = w(\lambda x)$. If we take real parts, we thus obtain
\begin{align*}
& \mathrm{Re} \int_{\Om} (x \cdot \nabla u) (-\DD)^s \overline{u} \, dx =  \left . \frac{d}{d \lambda} \right |_{\lambda \downarrow 1} \mathrm{Re} \left \{ \lambda^{\frac{2s -N}{2}} \int_{\R^N} w_{\lambda} \overline{w}_{1/\sqrt{\lambda}} \, dy \right \} \\
& = \left ( \frac{2s -N}{2} \right ) \mathrm{Re} \, \int_{\R^N} w \overline{w} \, dx + \left . \frac{d}{d \lambda} \right |_{\lambda \downarrow 1} I_{\sqrt{\lambda}} \\ & =  \left ( \frac{2s -N}{2} \right ) \mathrm{Re} \, \int_{\Om} u (-\DD)^s \overline{u} \, dx + \frac{1}{2} \left . \frac{d}{d \lambda} \right |_{\lambda \downarrow 1} I_{\lambda},
\end{align*}
where 
$$
I_\lambda = \mathrm{Re} \int_{\R^N} w_{\lambda} \overline{w}_{1/\lambda} \, dy.
$$
Now  the Cauchy-Schwarz inequality yields $I_\lambda \leq \| w_{\lambda} \|_{L^2} \| w_{1/\lambda} \|_{L^2} = \| w \|_{L^2}^2 = I_1$. Therefore,
$$
\left . \frac{d}{d \lambda} \right |_{\lambda \downarrow 1}  I_{\lambda} \leq 0.
$$
This completes the proof of Lemma \ref{lem:poho}.
\end{proof}

\section{Ground States and Cutoff Functions}

\label{sec:cutoff}

\subsection{Pohozaev Identities for Ground States}

Let $N \geq 1$, $s \in (0,1)$ and $\sigma > 0$. Recall the definition of the scaling index $s_c = \frac{N}{2} - \frac{s}{\sigma}$. In the energy-subcritical case $s_c < s$, we have the following Gagliardo-Nirenberg inequality
\be \label{ineq:GN}
\| u \|_{L^{2 \sigma+2}}^{2 \sigma+2} \leq C_{N, \sigma, s} \| (-\DD)^{s/2} u \|_{L^2}^{\frac{\sigma N}{s}} \| u \|_{L^2}^{2 \sigma + 2 - \frac{\sigma N}{s}},
\ee
valid for all $u \in H^s(\R^N)$. Here $C_{N, \sigma, s} > 0$ denotes the best constant. From \cite{FrLe-13,FrLeSi-15}, we recall existence and uniqueness (modulo symmetries) of optimizers $Q \in H^s(\R^N)$ for \eqref{ineq:GN}, which we refer to as {\em ground states}. Moreover as shown in \cite{FrLe-13,FrLeSi-15}, we can choose $Q=Q(|x|)> 0$ to be radially symmetric, strictly positive, and strictly decreasing in $|x|$. The function $Q$ is smooth and it can be rescaled to solve the equation 
\be \label{eq:Q}
(-\DD)^s Q + Q - Q^{2 \sigma+1} = 0 \quad \mbox{in $\R^N$}.
\ee
We have the following identities for the ground state $Q$.

\begin{prop} \label{prop:Q}
It holds that
$$
K_{N,\sigma,s} = \| (-\DD)^{s/2} Q \|_{L^2}^{s_c} \| Q \|_{L^2}^{s-s_c} = \left ( \frac{s_c}{N} \right)^{-\frac{s_c}{2}} E[Q]^{\frac{s_c}{2}} M[Q]^{\frac{s-s_c}{2}},
$$
where
$$
K_{N, \sigma,s} = \left ( \frac{2 s(\sigma+1)}{\sigma N C_{N,\sigma,s}} \right )^{\frac{s}{2 \sigma}} .
$$
\end{prop}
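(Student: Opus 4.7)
The plan is to extract two Pohozaev-type identities from the Euler-Lagrange equation \eqref{eq:Q} satisfied by $Q$, reduce everything to a single quantity (say $A := \|(-\Delta)^{s/2} Q\|_{L^2}^2$), and then read off the claimed formulas.

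First, set $A = \|(-\Delta)^{s/2} Q\|_{L^2}^2$, $M = \|Q\|_{L^2}^2$, and $P = \|Q\|_{L^{2\sigma+2}}^{2\sigma+2}$. Testing \eqref{eq:Q} against $Q$ yields
\[
A + M = P.
\]
For the second identity, I would test \eqref{eq:Q} against $x\cdot \nabla Q$. The key dispersive contribution is the Pohozaev identity on $\R^N$,
\[
\mathrm{Re}\int_{\R^N} (x\cdot \nabla \overline{Q})\,(-\DD)^s Q\,dx = \frac{2s-N}{2}\,A,
\]
which follows from the scaling law $\|(-\DD)^{s/2} Q(\lambda\,\cdot)\|_{L^2}^2 = \lambda^{2s-N} A$ by differentiation at $\lambda=1$ (this is the $\R^N$ counterpart of the argument in Lemma \ref{lem:poho}, and is an equality here because there is no boundary). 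The remaining two terms are routine: $\int(x\cdot \nabla Q)\,Q\,dx = -\tfrac{N}{2}M$ and $\int(x\cdot \nabla Q)\,Q^{2\sigma+1}\,dx = -\tfrac{N}{2\sigma+2}P$. Combining gives
\[
\frac{N-2s}{2}\,A + \frac{N}{2}\,M = \frac{N}{2\sigma+2}\,P.
\]

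Solving the linear system in $(A,M,P)$ and using the defining identity $2\sigma s_c = \sigma N - 2s$ (so that $2\sigma(s-s_c) = 2s(\sigma+1) - \sigma N$), I obtain the clean relations
\[
M = \frac{2(s-s_c)}{N}\,A, \qquad P = \frac{2s(\sigma+1)}{\sigma N}\,A.
\]
Plugging these into the Gagliardo-Nirenberg inequality \eqref{ineq:GN}, which holds with \emph{equality} at the optimizer $Q$, namely $P = C_{N,\sigma,s}\, A^{\sigma N/(2s)} M^{\sigma+1-\sigma N/(2s)}$, and using that the exponents simplify to $\sigma N/(2s) = 1 + \sigma s_c/s$ and $\sigma+1-\sigma N/(2s) = \sigma(s-s_c)/s$, I get
\[
A^{\sigma s_c/s}\,M^{\sigma(s-s_c)/s} = \frac{2s(\sigma+1)}{\sigma N\,C_{N,\sigma,s}} = K_{N,\sigma,s}^{2\sigma/s}.
\]
Taking the $\bigl(s/(2\sigma)\bigr)$-th power yields the first equality $\|(-\DD)^{s/2} Q\|_{L^2}^{s_c}\|Q\|_{L^2}^{s-s_c} = K_{N,\sigma,s}$.

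For the second equality I would just evaluate the energy:
\[
E[Q] = \tfrac12 A - \tfrac{1}{2\sigma+2} P = \tfrac12 A\Bigl(1 - \tfrac{2s}{\sigma N}\Bigr) = \frac{\sigma N - 2s}{2\sigma N}\,A = \frac{s_c}{N}\,A,
\]
where I used $P = \frac{2s(\sigma+1)}{\sigma N} A$ and $\sigma N - 2s = 2\sigma s_c$. Hence $A^{s_c/2} = (s_c/N)^{-s_c/2} E[Q]^{s_c/2}$, and inserting this into the first identity gives the second. No real obstacle beyond carefully tracking the exponents involving $s_c$; the cleanest check is to verify the two algebraic identities $2\sigma s_c = \sigma N - 2s$ and $2\sigma(s-s_c) = 2s(\sigma+1) - \sigma N$ before starting.
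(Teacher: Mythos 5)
Your proof is correct and follows essentially the same route as the paper's: integrate the Euler--Lagrange equation against $Q$ and $x\cdot\nabla Q$ to get the two Pohozaev identities, then combine them with the Gagliardo--Nirenberg equality at $Q$. The paper condenses the algebra as ``straightforward calculations''; you have simply written them out, and the bookkeeping (including the identities $2\sigma s_c=\sigma N-2s$ and $2\sigma(s-s_c)=2s(\sigma+1)-\sigma N$, the intermediate relations $M=\frac{2(s-s_c)}{N}A$, $P=\frac{2s(\sigma+1)}{\sigma N}A$, and $E[Q]=\frac{s_c}{N}A$) is accurate.
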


\begin{proof}
By integrating equation \eqref{eq:Q} against $Q$ and $x \cdot \nabla Q$ (where standard arguments show that $x \cdot \nabla Q \in H^s(\R^N)$, see \cite{FrLeSi-15}), we obtain the Pohozaev identities
$$
\| (-\DD)^{s/2} Q \|_{L^2}^2 + \| Q \|_{L^2}^2 - \| Q \|_{L^{2 \sigma+2}}^{2 \sigma+2} = 0,
$$
$$
\left ( \frac{2s-N}{2} \right ) \| (-\DD)^{s/2} Q \|_{L^2}^2 - \frac{N}{2} \| Q \|_{L^2}^2 + \frac{N}{2 \sigma+2} \| Q \|_{L^{2 \sigma+2}}^{2 \sigma+2}  = 0.
$$
Here we used that $ \langle x \cdot \nabla Q, (-\DD)^s Q \rangle = \left ( \frac{2s-N}{2} \right ) \| (-\DD)^s Q \|_{L^2}^2$ and $\langle x \cdot \nabla Q, Q \rangle = -\frac{N}{2} \| Q \|_{L^2}^2$. By using the two Pohozaev identities above together with the fact that $Q$ turns \eqref{ineq:GN} into an equality, the rest of the proof follows from straightforward calculations.
\end{proof}

Finally, we consider the energy-critical case $s_c=s$, i.\,e., we have $\sigma= \sigma_*:= \frac{2s}{N-2s}$, which requires that we are in space dimension $N > 2s$. In this case, we are lead to the Sobolev inequality
\be \label{ineq:Sob}
\| u \|_{L^{2 \sigma_* +2}}^{2 \sigma_* +2} \leq C_{N,\sigma} \| (-\DD)^{s/2} u \|_{L^2}^{ 2\sigma_* +2}
\ee
valid for all $u \in \dot{H}^s(\R^N)$, where $C_{N, \sigma} > 0$ denotes the best constant. Existence and uniqueness (modulo symmetries) of optimizers for \eqref{ineq:Sob} are classical facts; see, e.\,g, \cite{Li-83} via the equivalent problem of optimizing the weak Young inequality. In fact, the set of optimizers $Q \in \dot{H}^s(\R^N)$ for \eqref{ineq:Sob} are known in closed form and are given by
$$
Q_{\lambda, \mu, a}(x) = \lambda \cdot \left ( \frac{ 1 }{\mu^2 + |x-a|^2 } \right )^{\frac{N-2s}{2}}
$$
with parameters $\lambda \in \C \setminus \{ 0 \}$, $\mu> 0$, and $a \in \R^N$. Without loss of generality we can take $a=0$ and choose $\lambda$ real-valued and positive and pick $\mu>0$, so that $Q(x)= Q(|x|) > 0$ is radial and positive optimizer of \eqref{ineq:Sob} which solves
\be \label{eq:QSob}
(-\DD)^{s} Q - Q^{ \frac{N+2s}{N-2s}} =0 \quad \mbox{in $\R^N$}.
\ee
Note that $Q \in L^2(\R^N)$ if and only if $N > 4s$.

\begin{prop} \label{prop:QSob}
For the Sobolev optimizer $Q \in \dot{H}^s(\R^N)$ as above, we have
$$
K_{N,s} = \| (-\DD)^{s/2} Q \|_{L^2}^s = \left ( \frac{s}{N} \right )^{-\frac{s}{2}} E[Q]^{\frac{s}{2}} \ \  \mbox{with} \ \ K_{N,s} = \left ( \frac{1}{C_{N,s}} \right )^{\frac{N-2s}{4}} .
$$
\end{prop}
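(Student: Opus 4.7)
The plan is to mimic the subcritical argument from Proposition \ref{prop:Q}, but with one crucial simplification: since the equation \eqref{eq:QSob} contains no linear mass term, a \emph{single} Pohozaev-type identity (obtained by testing against $Q$ itself) will be sufficient, and we do not need to integrate against $x\cdot\nabla Q$. This is fortunate, since $Q$ need not lie in $L^2(\R^N)$ when $N\leq 4s$, so the scaling identity used in the subcritical proof is not directly available.

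First, I would pair equation \eqref{eq:QSob} with $Q$ in the duality between $\dot{H}^s(\R^N)$ and $\dot{H}^{-s}(\R^N)$. This is well defined: $Q\in\dot{H}^s(\R^N)$, and $Q^{(N+2s)/(N-2s)} = Q^{2\sigma_*+1}\in L^{(2N)/(N+2s)}(\R^N)$ by the Sobolev embedding $\dot{H}^s\hookrightarrow L^{2\sigma_*+2}$, which is the dual of $\dot{H}^{-s}\hookleftarrow L^{(2N)/(N+2s)}$. The resulting identity is
\begin{equation*}
\|(-\DD)^{s/2}Q\|_{L^2}^2 \;=\; \|Q\|_{L^{2\sigma_*+2}}^{2\sigma_*+2}.
\end{equation*}
Since $Q$ saturates \eqref{ineq:Sob}, one has $\|Q\|_{L^{2\sigma_*+2}}^{2\sigma_*+2} = C_{N,s}\,\|(-\DD)^{s/2}Q\|_{L^2}^{2\sigma_*+2}$. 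Combining these two relations, and using $2\sigma_* = 4s/(N-2s)$, one solves to get
\begin{equation*}
\|(-\DD)^{s/2}Q\|_{L^2} \;=\; C_{N,s}^{-1/(2\sigma_*)} \;=\; \left(\tfrac{1}{C_{N,s}}\right)^{(N-2s)/(4s)}.
\end{equation*}
Raising to the power $s$ gives exactly $K_{N,s} = (1/C_{N,s})^{(N-2s)/4}$, establishing the first equality of the proposition.

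For the second equality, I would plug the Pohozaev identity back into the definition of the energy:
\begin{equation*}
E[Q] \;=\; \left(\tfrac{1}{2} - \tfrac{1}{2\sigma_*+2}\right)\|(-\DD)^{s/2}Q\|_{L^2}^2 \;=\; \tfrac{\sigma_*}{2(\sigma_*+1)}\,\|(-\DD)^{s/2}Q\|_{L^2}^2.
\end{equation*}
A short computation with $\sigma_*+1 = N/(N-2s)$ shows $\sigma_*/(\sigma_*+1) = 2s/N$, hence $E[Q] = (s/N)\|(-\DD)^{s/2}Q\|_{L^2}^2$. Taking the $s/2$-th power of both sides and rearranging yields $\|(-\DD)^{s/2}Q\|_{L^2}^s = (s/N)^{-s/2}E[Q]^{s/2}$, as claimed.

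The only delicate point is the justification of the pairing $\langle Q, (-\DD)^s Q\rangle = \|(-\DD)^{s/2}Q\|_{L^2}^2$ without assuming $Q\in L^2$; this is handled entirely through the $\dot H^s$\!--$\!\dot H^{-s}$ duality together with the Sobolev embedding noted above. Everything else reduces to elementary algebra with the exponents $\sigma_* = 2s/(N-2s)$.
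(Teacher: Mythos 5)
Your proof is correct and follows the same route as the paper: integrate \eqref{eq:QSob} against $Q$ to obtain the single Pohozaev identity $\|(-\DD)^{s/2}Q\|_{L^2}^2 = \|Q\|_{L^{2\sigma_*+2}}^{2\sigma_*+2}$, then combine this with the saturation of \eqref{ineq:Sob} and a short exponent computation. The only difference is that you spell out the $\dot H^s$--$\dot H^{-s}$ duality justification of the pairing, which the paper leaves implicit.
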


\begin{proof}
If we integrate \eqref{eq:QSob} against $Q$, we find $\| (-\DD)^{s/2} Q \|_{L^2}^2 = \| Q \|_{L^{2 \sigma_* +2}}^{2 \sigma_* +2}$ with $\sigma_* = 2s/(N-2s)$. Since $Q$ also optimizes \eqref{ineq:Sob}, we obtain the desired result by a straightforward calculation.  
\end{proof}

\subsection{Cutoff Function for $L^2$-Critical Case}
To construct a suitable virial function $\phi(r)$ for the $L^2$-critical case, we can adapt the choice made in \cite{OgTs-91} used for classical NLS. Let $g \in W^{3,\infty}(\R^N)$ be a radial function such that
\be
g(r) = \begin{dcases*} r & \quad for $0 \leq r \leq 1$, \\
r - (r-1)^3 & \quad for $1 < r \leq 1 + 1/\sqrt{3}$, \\
\mbox{$g(r)$ smooth and $g'(r) < 0$} & \quad for $1+1/\sqrt{3} < r < 10$, \\
0 & \quad for $r \geq 10$. \end{dcases*}
\ee
We define the radial function $\phi(r)$ by setting
\be
\phi(r) := \int_0^r g(s) \, ds,
\ee
It is elementary to check that $\phi(r)$ defined above satisfies assumption \eqref{def:phi}. Recall that we set $\phi_R(r) = R^2 \phi(r/R)$ for $R > 0$ given. Furthermore, recall the definitions of the nonnegative functions $\psi_{1,R}(r)=1- \pt_r^2 \phi_R(r)$ and $\psi_{2,R}(r)=N-\DD \phi_R(r)$ from \eqref{def:psi12}. Let $c(\eta) = \eta/(N+2s)$ for $\eta > 0$. We claim that if $\eta > 0$ sufficiently small and any $R >0$, we have
\be \label{ineq:psi12}
\psi_{1,R}(r) - c(\eta) ( \psi_{2,R}(r) )^{\frac{N}{2s}} \geq 0 \quad \mbox{for all $r \geq 0$}. 
\ee
To prove \eqref{ineq:psi12}, we argue as follows. First, by scaling, we can assume $R=1$ without loss of generality. Let us put $\psi_1(r) = \psi_{1,R=1}(r)$ and $\psi_{2}(r) = \psi_{2,R=1}(r)$. Note that $\psi_{1,R}(r) \equiv \psi_{2,R}(r) \equiv 0$ for $0 \leq r \leq R$ and hence \eqref{ineq:psi12} is trivially true in that region. Next, we observe that
$$
\psi_1(r) \geq 1 , \quad |\psi_2(r)|=|N - \DD \phi(r)| \leq C \quad \mbox{for $r \geq 1+1/\sqrt{3}$},
$$ 
with some constant $C > 0$.  Thus we can choose $\eta > 0$ sufficiently small such that \eqref{ineq:psi12} holds for $r \geq 1+1/\sqrt{3}$. Finally, a computation yields that 
$$
\psi_1(r) = 3 (r-1)^2, \quad |\psi_2(r)|^{\frac{N}{2s}} = |N - \DD \phi(r) |^{\frac{N}{2s}} \leq C (r-1)^{\frac{N}{s}} \quad \mbox{for $1 \leq r \leq 1+ 1/\sqrt{3}$},
$$
with some constant $C > 0$. Since $N/s \geq 2$, we deduce that \eqref{ineq:psi12} holds  in the region $1 \leq r \leq 1 + 1/\sqrt{3}$ too, provided that $\eta > 0$ is sufficiently small. 

\end{appendix}

\bibliographystyle{amsplain}
\bibliography{fNLSBlowup}

\end{document}